\documentclass[10pt,leqno]{amsart}
\usepackage[utf8]{inputenc}
\usepackage{amstext}
\usepackage{amsthm}
\usepackage{amssymb}
\usepackage{microtype}
\usepackage[bookmarks=false,
 breaklinks=false,pdfborder={0 0 1},backref=false,colorlinks=false]
 {hyperref}
\hypersetup{pdftitle={Tachibana Revisited},
 pdfauthor={Xiaolong Li and Peter Petersen},
 pdfsubject={Tachibana-type rigidity theorems for closed manifolds under weakened partial-sum conditions on the eigenvalues of curvature operators},
 pdfkeywords={Einstein manifolds, curvature operator, Bochner technique, rigidity}}

\makeatletter

\providecommand{\tabularnewline}{\\}

\numberwithin{equation}{section}
\numberwithin{figure}{section}

\@ifundefined{date}{}{\date{}}

\usepackage{needspace}
\newtheorem{theorem}{Theorem}[section]
\newtheorem{proposition}[theorem]{Proposition}
\newtheorem{corollary}[theorem]{Corollary}
\newtheorem{lemma}[theorem]{Lemma}\theoremstyle{remark}
\newtheorem{remark}[theorem]{Remark}\theoremstyle{plain}
\numberwithin{equation}{section}

\DeclareMathOperator{\tr}{tr}
\DeclareMathOperator{\ad}{ad}
\DeclareMathOperator{\rank}{rank}
\DeclareMathOperator{\Id}{Id}

\title[Tachibana Revisited]{Tachibana's Theorem Revisited}

\author[Li]{Xiaolong Li}
\address{Department of Mathematics and Statistics, Auburn University,
Auburn, AL 36849, USA}
\email{xil0005@auburn.edu}
\thanks{Xiaolong Li's research is partially supported by NSF-DMS
\#2553660 and a start-up grant at Auburn University.}

\author[Petersen]{Peter Petersen}
\address{University of California, Los Angeles, 520 Portola Plaza, CA, 90095}
\email{petersen@math.ucla.edu}

\subjclass[2020]{Primary 53C25; Secondary 53C21, 53C26, 53C55}
\keywords{Einstein manifolds, curvature operator, Bochner technique,
K\"ahler geometry, quaternionic-K\"ahler geometry, rigidity}

\makeatother

\begin{document}
\begin{abstract}
We prove Tachibana-type rigidity theorems for closed manifolds under weakened partial-sum conditions on the eigenvalues of curvature operators. Specifically, an Einstein manifold of dimension $n\geq4$ is locally symmetric if its curvature operator is $\frac{2(n-1)}{3}$-nonnegative. The corresponding thresholds are $\frac{2(n+1)}{3}$ for the primitive curvature operator of a Kähler-Einstein manifold of complex dimension $n\geq2$, and $\frac{2(n+2)}{3}$ for the curvature operator of a quaternionic-Kähler manifold of real dimension $4n\geq8$. 
\end{abstract}

\maketitle

\section{Introduction}

 This paper is focused on rigidity theorems for Einstein manifolds under suitable nonnegativity conditions on curvatures. There are now a plethora of such results some of which are mentioned later in the introduction. We will be concerned with assumptions that relate to the curvature operator. A classical result of Tachibana \cite{Tachibana1974} states that a closed Einstein manifold with nonnegative curvature operator is locally symmetric. The curvature condition was weakened to $\lfloor\frac{n-1}{2}\rfloor$-nonnegativity in dimensions $n\geq5$ by the second author and Wink \cite{PW21}. They subsequently proved analogous results for Kähler-Einstein manifolds of complex dimension $n\geq4$ under $\frac{n+1}{2}$-nonnegativity of the Kähler curvature operator \cite{PW21Crelle}, and for quaternionic-Kähler manifolds of real dimension $4n\geq8$ under $\lfloor\frac{n+1}{2}\rfloor$-nonnegativity of the quaternionic curvature operator \cite{PW22}.

A self-adjoint operator $L:V\rightarrow V$ with eigenvalues $\lambda_{1}\leq\cdots\leq\lambda_{N}$ is \emph{k-nonnegative}, for $1\leq k\leq N$, provided 
\[
\Sigma_{k}(L):=\lambda_{1}+\cdots+\lambda_{\lfloor k\rfloor}+(k-\lfloor k\rfloor)\lambda_{\lfloor k\rfloor+1}\geq0,
\]
where the last term is omitted when $k$ is an integer. It is \emph{$k$-positive} if $\Sigma_{k}(L)>0$. Since the averages $\frac{1}{k}\Sigma_{k}(L)$ are nondecreasing, larger values of $k$ give weaker conditions. Note that if $W\subset V$ is invariant under $L$, then 
\begin{equation}
\Sigma_{k}\left(L\right)\leq\Sigma_{k}\left(L|_{W}\right)\label{eq:restriction_ineq}
\end{equation}
for $1\leq k \leq \dim(W)$. 

The purpose of this paper is to further weaken the curvature operator conditions in Tachibana-type results. We will assume that all manifolds are closed connected Einstein manifolds. The curvature tensor, $R$, is viewed as a self-adjoint operator on $\bigwedge^{2}TM$. 
Given that the curvature operator vanishes on the orthogonal complement of the holonomy $\mathfrak{h}\subset\mathfrak{so}\left(T_{p}M\right)$, we can in the Kähler case restrict the curvature to $\mathfrak{u}\left(n\right)=\mathfrak{u}\left(1\right)\oplus\mathfrak{su}\left(n\right)\subset\mathfrak{so}\left(2n\right)$,
and in the quaternionic-Kähler case to $\mathfrak{sp}\left(1\right)\oplus\mathfrak{sp}\left(n\right)\subset\mathfrak{so}\left(4n\right)$.
As the metric is Einstein we can decompose the curvature tensor: 
$$R=cR_{1}+R_{0},$$
where $c$ is a constant that depends on the Einstein constant, $R_{1}$ is a parallel tensor that corresponds to the curvature tensor of the canonical metric on $S^{n}$, $\mathbb{CP}^{n}$, or $\mathbb{HP}^{n}$, respectively, and $R_{0}$ is an algebraic curvature operator with vanishing Ricci curvature.
In the real case $R_{0}=W$ is the Weyl tensor, in the Kähler case $R_{0}=B$ is the Bochner tensor which vanishes on $\mathfrak{u}\left(1\right)$, and in the quaternionic-Kähler case $R_{0}$ vanishes on $\mathfrak{sp}\left(1\right)$. See also \cite{Besse1987},  \cite{Bochner1949},\cite{Alekseevskii1968},  and \cite{Salamon1982} for more on these decompositions.

To obtain slightly stronger results we will use \eqref{eq:restriction_ineq} to further restrict the curvature operator to $\mathfrak{g}=\mathfrak{so}(n)$, $\mathfrak{su}(n)$, or $\mathfrak{sp}(n)$ depending on which of the three geometries we are considering.

\begin{theorem}\label{thm:einstein} Let $(M^{n},g)$ be a closed, connected Einstein manifold of dimension $n\geq4$. If its curvature operator is $\frac{2(n-1)}{3}$-nonnegative at every point, then $(M,g)$ is locally symmetric. \end{theorem}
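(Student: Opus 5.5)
The strategy is the Bochner technique applied to the curvature tensor itself, as in Tachibana's proof and the Petersen--Wink refinement. On an Einstein manifold the second Bianchi identity makes $R$ harmonic, i.e.\ divergence free. The Weitzenb\"ock formula then gives
\[
0=\Delta \tfrac12|R|^2=|\nabla R|^2+g(\mathrm{Ric}(R),R),
\]
where $\mathrm{Ric}(R)$ is the Lichnerowicz curvature term. Integrating over the closed manifold, it suffices to show $g(\mathrm{Ric}(R),R)\ge 0$ pointwise under the hypothesis. Then $\nabla R=0$ and $(M,g)$ is locally symmetric.

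\textbf{Rewriting the curvature term.} Following Petersen--Wink, write
\[
g(\mathrm{Ric}(R),R)=\tfrac12\sum_\alpha \lambda_\alpha\,|[\Xi_\alpha,R]|^2 ,
\]
where $\{\Xi_\alpha\}$ is an orthonormal eigenbasis of the curvature operator $\mathfrak{R}$ on $\mathfrak{so}(n)$ with eigenvalues $\lambda_\alpha$, and $[\Xi,R]$ is the induced action of $\mathfrak{so}(n)$ on algebraic curvature tensors. Since $R=cR_1+W$ with $R_1$ invariant, we have $[\Xi_\alpha,R]=[\Xi_\alpha,W]$, so the weights are $a_\alpha=|[\Xi_\alpha,W]|^2\ge0$. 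The required inequality is $\sum_\alpha\lambda_\alpha a_\alpha\ge0$.

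\textbf{The abstract lemma.} If $\mathfrak{R}$ is $k$-nonnegative and $0\le a_\alpha\le \frac1k\sum_\beta a_\beta$ for every $\alpha$, then $\sum\lambda_\alpha a_\alpha\ge0$. This follows by Abel summation after ordering the $\lambda_\alpha$, and it works for fractional $k$ in the definition of $\Sigma_k$. So everything reduces to the pointwise estimate
\[
|[\Xi,W]|^2\le \frac{3}{2(n-1)}\sum_\beta|[\Xi_\beta,W]|^2 \qquad\text{for all unit }\Xi\in\mathfrak{so}(n).
\]

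\textbf{The sharp estimate.} By representation theory, $\sum_\beta|[\Xi_\beta,W]|^2$ is the Casimir of $\mathfrak{so}(n)$ acting on Weyl tensors, namely $2(n-1)|W|^2$ up to normalization. The claim therefore becomes $|[\Xi,W]|^2\le 3|W|^2$. Petersen--Wink only obtained a weaker bound than this, and improving it is the main obstacle.

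My first attempt is to write $[\Xi,W]$ as a derivation acting on the four slots of $W$ and bring $\Xi$ to normal form with $2\times2$ blocks of weights $\theta_i$, where $\sum\theta_i^2=1$ in the operator normalization. On a weight component of $W$ the action of $\Xi$ multiplies by $i(\pm\theta_a\pm\theta_b\pm\theta_c\pm\theta_d)$, so one needs the squared weight sum to be at most $3$ on the components that actually occur. Decomposing $W$ into $\Xi$-weight spaces, I expect the algebraic Bianchi identity to rule out the top weights $(\theta_a+\theta_b+\theta_c+\theta_d)^2$ with all four indices distinct and aligned. Combined with Cauchy--Schwarz and the trace-free condition, this should cap the factor at $3$.

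If that direct approach is messy, the fallback is to use $\mathfrak{so}(n)$-equivariance to reduce to $\Xi$ in normal form and compute $\langle [\Xi,[\Xi,W]],W\rangle$ as a quadratic form in $W$. One then bounds its largest eigenvalue on the Weyl module by $3$, checking sharpness on examples such as $\mathbb{CP}^2$-type or product Weyl tensors. The threshold $\frac{2(n-1)}{3}$ suggests equality is attained, so the Bianchi constraint must be used exactly at this step.
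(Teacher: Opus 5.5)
There is a genuine gap, and it sits exactly at the step you flag as the main obstacle. The uniform estimate you reduce to is false.

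Your setup matches the paper: the Bochner formula, replacing $R$ by $W$ in the action terms, the total action as a Casimir, and Abel summation. In the paper's normalization the total action is $\sum_\beta\|\Xi_\beta\cdot W\|^2=4(n-1)\|W\|^2$. So your requirement $|[\Xi,W]|^2\le\frac{3}{2(n-1)}\sum_\beta|[\Xi_\beta,W]|^2$ reads $\|\Xi\cdot W\|^2\le 6\|W\|^2$.

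Here is a counterexample.
\begin{itemize}
\item Take a $4$-plane and an orthonormal basis $E_1,E_2,E_3$ of its self-dual $2$-forms with $[E_1,E_2]=\sqrt{2}E_3$ cyclically, as in Lemma~\ref{lem:common-curvature-block}.
\item Set $W=E_2\otimes E_2-E_3\otimes E_3$, extended by zero. The coefficients $0,1,-1$ sum to zero, so the argument of Lemma~\ref{lem:common-curvature-block} shows $W$ is a Weyl tensor in every dimension $n\geq4$.
\item Since $\ad_{E_1}$ rotates the $(E_2,E_3)$-plane with speed $\sqrt2$, one gets $\|E_1\cdot W\|^2=4\bigl(1-(-1)\bigr)^2=16=8\|W\|^2$.
\item The total action is $8(n-1)$. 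The ratio $\frac{2}{n-1}$ therefore exceeds your required $\frac{3}{2(n-1)}$.
\end{itemize}
Replacing the coefficients by $(\epsilon,1,-1-\epsilon)$ makes $E_1$ a simple eigenvector of $R=c\Id+W$, with ratio still close to $8$. So no clever choice of eigenbasis rescues the hypothesis of your abstract lemma.

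In your weight language, the Bianchi identity does not exclude the top weight. The tensor $\operatorname{Re}(E_2-\sqrt{-1}E_3)^{\otimes 2}$ has $\ad_{E_1}$-weight $2\sqrt2$ and is Weyl. Thus $8$ is the sharp uniform constant even on Weyl tensors. Any argument through a uniform bound on each $\|\Xi_\alpha\cdot W\|^2$ stops at the Petersen--Wink threshold $\frac{n-1}{2}$. Your planned sanity check on the $\mathbb{CP}^2$-type block $\operatorname{diag}(-1,-1,2)$ would mislead you, because there the ratio is exactly $6$.

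The missing idea is to use the ordering of the eigenvalues and bound only prefix sums. Proposition~\ref{prop:ordered} shows: if $\eta_1,\dots,\eta_N$ are eigenvectors of $W$ with increasing eigenvalues, then $\sum_{i\le l}\|\eta_i\cdot W\|^2\le 6l\|W\|^2$ for every $l$. This holds for any self-adjoint $T$, with no Bianchi identity. The curvature identities enter only through the total $4(n-1)\|W\|^2$, so your expectation that Bianchi must be used at this step is also off.

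The proof of that proposition runs as follows.
\begin{itemize}
\item Write the prefix sum as $2\sum_{a<b}K_{ab}(\lambda_a-\lambda_b)^2$, with $K_{ab}=\sum_{i\le l}\langle[\eta_i,\eta_a],\eta_b\rangle^2$.
\item Control the positive parts $\lambda^+$ by row sums $d_a\le\beta$, which come from Lemma~\ref{lem:bracket-family}.
\item Control the negative parts $\lambda^-$, which are monotone by the ordering, by cut sums $C_h\le\alpha h$.
\item Combine the two using $\alpha+\beta=3l$.
\end{itemize}

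Your Abel-summation lemma must then be replaced by its prefix version, Lemma~\ref{lem:prefix-comparison}. If $w_i\ge0$, $\sum_{i\le l}w_i\le al$ for all $l$, and $\sum_i w_i=ak$, then $\sum_i\lambda_iw_i\ge a\Sigma_k(\lambda)$. Apply it with $a=6\|W\|^2$ and $k=\frac{2(n-1)}{3}$.

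In the example above, this is exactly what saves the argument. The large action $16$ sits on the middle eigenvector $E_1$. The first two prefix sums, $4$ and $4+16$, stay below $12$ and $24$.
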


Note that the nonnegativity assumption is weaker than that used to control Betti numbers in \cite{PW21}. Appendix \ref{sec:appendix-ricci} has results that address whether it suffices to assume that the curvature tensor is harmonic given that it is $\frac{2(n-1)}{3}$-nonnegative. This is actually the case for $n\leq 341$. For larger dimensions this remains open. However, algebraic examples show that the Bochner technique fails to offer an answer.
\Needspace{9\baselineskip} \begin{theorem}\label{thm:kahler}
Let $(M^{2n},g,J)$ be a closed, connected Kähler-Einstein manifold of complex dimension $n\geq2$. If the curvature operator restricted to $\mathfrak{su}\left(n\right)$ is $\frac{2(n+1)}{3}$-nonnegative at every point, then $(M,g)$ is locally symmetric. \end{theorem}

This result strengthens the conclusions of Theorem D in \cite{BNPSW2026} with weaker nonnegativity assumptions. In contrast to the real case, Appendix \ref{sec:appendix-kahler-harmonic} shows that it suffices to assume that the curvature is harmonic when the metric is Kähler.

\begin{theorem}\label{thm:quaternionic} 
Let $(M^{4n},g,I,J,K)$, $n\geq2$, be a closed, connected quaternionic-Kähler $4n$-manifold. If the quaternionic-Kähler curvature operator restricted to $\mathfrak{sp}\left(n\right)$ is $\frac{2(n+2)}{3}$-nonnegative at every point, then $(M,g)$ is locally symmetric. \end{theorem}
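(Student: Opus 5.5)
We will follow the Bochner-technique strategy of Petersen--Wink, adapted to the quaternionic-Kähler setting. On a quaternionic-Kähler manifold with $n\geq2$ the metric is automatically Einstein. The full curvature splits as $R=cR_{1}+R_{0}$, where $R_{1}$ is the parallel curvature tensor of $\mathbb{HP}^{n}$ and $R_{0}$ is a Ricci-flat algebraic curvature tensor taking values in $\mathfrak{sp}(n)$. Since $R_{1}$ is parallel, $\nabla R=\nabla R_{0}$ and $R_{0}$ is harmonic, so it suffices to show $\nabla R_{0}=0$. Integrating the Weitzenböck formula for harmonic curvature tensors gives
\[
0=\int_{M}\left(|\nabla R_{0}|^{2}+\tfrac{1}{2}g(\mathrm{Ric}(R_{0}),R_{0})\right),
\]
where $\mathrm{Ric}(T)$ is the Lichnerowicz Laplacian curvature term. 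We therefore aim to prove the pointwise estimate $g(\mathrm{Ric}(R_{0}),R_{0})\geq0$, with equality forcing either $R_{0}=0$ or a rigid algebraic configuration; in either case $\nabla R_{0}=0$.

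\textbf{Step 1 (the curvature term).} We use the Petersen--Wink identity
\[
g(\mathrm{Ric}(T),T)=\sum_{\alpha}\lambda_{\alpha}\,|\Xi_{\alpha}T|^{2},
\]
summed over an orthonormal eigenbasis $\Xi_{\alpha}$ of the curvature operator restricted to $\mathfrak{sp}(1)\oplus\mathfrak{sp}(n)$. On $\mathfrak{sp}(1)$ we have $R=cR_{1}$, acting as a positive multiple of the identity when $c>0$. On $\mathfrak{sp}(n)$ the eigenvalues are those of $cR_{1}|_{\mathfrak{sp}(n)}+R_{0}$. Because $R_{1}$ is a multiple of the identity on $\mathfrak{sp}(n)$ as well (after normalization), these eigenvalues are $c'+\mu_{\alpha}$, where the $\mu_{\alpha}$ are the eigenvalues of $R_{0}|_{\mathfrak{sp}(n)}$. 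We then split the sum into an $\mathfrak{sp}(1)$ part and an $\mathfrak{sp}(n)$ part.

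\textbf{Step 2 (the weight estimate).} The key algebraic input is a bound of the form
\[
|\Xi T|^{2}\leq \frac{1}{k}\sum_{\alpha}|\Xi_{\alpha}T|^{2}\quad\text{for every unit } \Xi\in\mathfrak{sp}(n),
\]
valid for $T$ an algebraic curvature tensor of $\mathfrak{sp}(n)$-type with vanishing Ricci. The paper's thresholds suggest the target $k=\tfrac{2(n+2)}{3}$. Given such a bound, the standard lemma (a weighted sum $\sum\lambda_{\alpha}\omega_{\alpha}$ with $0\leq\omega_{\alpha}\leq\frac{1}{k}\sum\omega_{\beta}$ is nonnegative when $\Sigma_{k}\geq0$) yields nonnegativity of the $\mathfrak{sp}(n)$ part from the $\frac{2(n+2)}{3}$-nonnegativity hypothesis.

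To reach the weaker threshold, we will not bound $\max|\Xi T|^{2}$ crudely. Instead we will combine the Casimir identity $\sum_{\alpha}|\Xi_{\alpha}T|^{2}=C\,|T|^{2}$ for Ricci-flat $\mathfrak{sp}(n)$-curvature tensors with a sharper estimate $|\Xi T|^{2}\leq A\,|\Xi|^{2}|T|^{2}$. The ratio $C/A$ should be $\tfrac{2(n+2)}{3}$. Alternatively, we can exploit the fact that $g(\mathrm{Ric}(R_{0}),R_{0})$ also contains the term $R_{0}$ acting on itself, namely $\sum_{\alpha}\mu_{\alpha}|\Xi_{\alpha}R_{0}|^{2}$, which is cubic in $R_{0}$. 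Combined with the Einstein shift $c'$, this should allow a $3/2$ gain over the Petersen--Wink threshold $\frac{n+2}{2}$-type bound.

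\textbf{Step 3 (sign of $c$).} If $c\leq0$, then $k$-nonnegativity forces a trace condition. Since $\tr R_{0}|_{\mathfrak{sp}(n)}=0$, the average of the eigenvalues is $c'$, so $\Sigma_{k}\geq0$ with $k<\dim\mathfrak{sp}(n)$ gives $c'\geq0$, with equality only if all eigenvalues vanish, i.e.\ the manifold is flat or hyperkähler-flat. Hence the case $c\leq0$ yields local symmetry, and we may assume $c>0$.

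\textbf{Main obstacle.} The main obstacle is Step 2: establishing the sharp weight inequality with constant exactly $\tfrac{2(n+2)}{3}$ for Ricci-flat $\mathfrak{sp}(n)$-type tensors. Our first attempt will be to decompose $\Xi T$ via the representation theory of $\mathfrak{sp}(n)$ on $S^{4}\mathbb{C}^{2n}$, which is where $R_{0}$ lives, and to compute the maximal eigenvalue of $\Xi\mapsto|\Xi T|^{2}$. We expect the extremal case to be realized by $\Xi$ in a $\mathfrak{sp}(1)\subset\mathfrak{sp}(n)$ factor, mirroring the real and Kähler computations behind the first two theorems.
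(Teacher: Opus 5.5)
Your Step 2 is a genuine gap. The inequality as you state it is false, so this route cannot reach $\frac{2(n+2)}{3}$.

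In operator norms the total action is $\sum_\alpha\|\Xi_\alpha\cdot R_0\|^2=4(n+2)\|R_0\|^2$. A weight bound $\|\Xi\cdot R_0\|^2\le\frac1k\sum_\alpha\|\Xi_\alpha\cdot R_0\|^2$ with $k=\frac{2(n+2)}{3}$ therefore amounts to $\|\Xi\cdot R_0\|^2\le 6\|R_0\|^2$ for unit $\Xi$. Here is a counterexample:
\begin{itemize}
\item Take a quaternionic line in $\mathbb{H}^n$. Let $E_1,E_2,E_3\in\mathfrak{sp}(n)$ be left multiplication by $\mathrm{i},\mathrm{j},\mathrm{k}$ divided by $\sqrt{2}$. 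These are orthonormal, with $[E_3,E_1]=\sqrt{2}\,E_2$ and $[E_3,E_2]=-\sqrt{2}\,E_1$.
\item Put $R_0=E_1\otimes E_1-E_2\otimes E_2$. The coefficients sum to zero, so $R_0$ satisfies the first Bianchi identity and is Ricci-flat. It is therefore an admissible quaternionic Weyl tensor, with $\|R_0\|^2=2$.
\item $E_3$ is one of the eigenvectors of $R_0$, so restricting your bound to eigenvectors does not help.
\item $[\ad_{E_3},R_0]$ sends $E_1\mapsto 2\sqrt{2}\,E_2$ and $E_2\mapsto 2\sqrt{2}\,E_1$ and kills the orthogonal complement. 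Hence $\|E_3\cdot R_0\|^2=16=8\|R_0\|^2$.
\end{itemize}
Your expectation that the extremal $\Xi$ sits in an $\mathfrak{sp}(1)$ factor matches this example. The extremal constant, however, is the classical $8$, not $6$. Any argument based on a maximal-weight bound therefore stops at $k=\frac{4(n+2)}{8}=\frac{n+2}{2}$, the old threshold. Your alternative (the cubic self-action term) is never turned into an inequality. Also, the needed gain over $\frac{n+2}{2}$ is a factor $\frac43$, not $\frac32$.

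The missing idea is to bound only \emph{ordered prefix sums} of the weights. Let $\eta_1,\ldots,\eta_N$ be an orthonormal eigenbasis of $R_0$ on $\mathfrak{sp}(n)$ with eigenvalues $\mu_1\le\cdots\le\mu_N$. Then $\sum_{i=1}^{l}\|\eta_i\cdot R_0\|^2\le 6l\|R_0\|^2$ for every $l$. In the example above, the heavy weight sits at eigenvalue $0$ in the middle of the spectrum, where it does no harm. The paper proves this for any self-adjoint $T$ on a subalgebra of $\mathfrak{so}(V)$, as follows:
\begin{itemize}
\item Write $\sum_{i\le l}\|\eta_i\cdot T\|^2=\sum_{a,b}K_{ab}(\mu_a-\mu_b)^2$, where $K_{ab}=\sum_{i\le l}\langle[\eta_i,\eta_a],\eta_b\rangle^2$.
\item Bound the row sums of $K$ by $\min\{2l,l+2\}$; this handles the positive parts of the $\mu_a$.
\item Bound the cut sums $\sum_{a\le h<b}K_{ab}$ by $\max\{l,2l-2\}\,h$; this handles the negative parts and is where their monotonicity enters.
\item Both bounds come from the bracket estimate $\sum_{s=1}^m|[L,X_s]|^2\le m+2$ for unit $L$ and orthonormal $X_1,\ldots,X_m$.
\item Combine the two bounds with the triangle inequality for the seminorm $x\mapsto\bigl(\sum_{a<b}K_{ab}(x_a-x_b)^2\bigr)^{1/2}$.
\end{itemize}
The comparison lemma must then be used in its prefix form. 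Namely, $\sum_{i\le l}w_i\le al$ for all $l$ together with $\sum_iw_i=ak$ gives $\sum_i\lambda_iw_i\ge a\Sigma_k(\lambda)$. This yields $Q(R,R_0)\ge 6\|R_0\|^2\,\Sigma_{\frac{2(n+2)}{3}}(R|_{\mathfrak{sp}(n)})$.

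You also leave the Casimir constant unspecified, but it must be exactly $4(n+2)$. The value $\frac43(3n+4)$ in Corollary~4.5 of Petersen--Wink is incorrect. The paper obtains $4(n+2)$ from irreducibility of the quaternionic Weyl module and evaluation on a block of the above type. With these two ingredients, integrating the Bochner formula as in your Step 1 gives $\nabla R_0=0$ directly, and the sign analysis of $c$ in Step 3 becomes unnecessary.
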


The proofs use the Bochner technique. For an Einstein metric, the curvature tensor is divergence-free and satisfies 
\[
\frac{1}{2}\Delta\|R\|^{2}=\|\nabla R\|^{2}+\frac{1}{2}\sum_{i}\lambda_{i}\|\eta_{i}\cdot R\|^{2},
\]
where $\Delta=-\operatorname{div}\nabla=\nabla^{*}\nabla$, the $\eta_{i}$ form an orthonormal eigenbasis of the curvature operator on the relevant Lie algebra $\mathfrak{g}=\mathfrak{so}\left(n\right),\,\mathfrak{u}\left(n\right),\,\mathfrak{sp}\left(1\right)\oplus\mathfrak{sp}\left(n\right)$,
with $\lambda_{i}$ being the corresponding eigenvalues. The dot denotes the Lie algebra
action of real skew-adjoint operators on tensors. All displayed norms
are Hilbert-Schmidt norms of curvature operators; see Section~\ref{sec:preliminaries}.
This identity is also the point of departure for the arguments in \cite{PW21,PW22}.

Since $cR_{1}$ is parallel, $R_{0}$ is also divergence free and satisfies the second Bianchi identity. The Bochner formula can consequently be reduced to 
\begin{equation}
\frac{1}{2}\Delta\|R_{0}\|^{2}=\|\nabla R_{0}\|^{2}+\frac{1}{2}Q(R,R_{0}), \textrm{ where }Q(R,R_{0})=\sum_{i}\lambda_{i}\|\eta_{i}\cdot R_{0}\|^{2}.\label{eq:bochner}
\end{equation}
See \cite[Proposition~1.4 and Corollary~1.5]{PW22}. The geometric problem then involves showing that $Q(R,R_{0})$ is nonnegative under the nonnegativity assumptions.

The $\eta_{i}$ that act nontrivially on $R_{0}$ are contained in $\mathfrak{g}=$$\mathfrak{so}(n)$, $\mathfrak{su}(n)$, or $\mathfrak{sp}(n)$. Thus these particular $\eta_{i}$ diagonalize $R_{0}$ as well as the restriction $R|_{\mathfrak{g}}$.
We shall henceforth assume that $\eta_{i}$, $i=1,...,N=\dim\mathfrak{g}$ are the eigenvectors for the restriction of $R|_{\mathfrak{g}}$ with eigenvalues $\lambda_{1}\leq\cdots\leq\lambda_{N}$. This means that we can write
\[
Q(R,R_{0})=\sum_{i=1}^N\lambda_{i}\|\eta_{i}\cdot R_{0}\|^{2}
\]
in equation \eqref{eq:bochner}.

Previous results use the general optimal action estimate on algebraic curvature operators: $\|\eta_{i}\cdot R\|^{2}\leq8\|R\|^{2}$. Our main algebraic improvement comes from a better action bound on suitable averages of these action terms: 
\begin{equation}
\sum^{l}_{i=1}\|\eta_{i}\cdot R_{0}\|^{2}\leq6l\|R_{0}\|^{2}, \text{ for }1\leq l\leq\dim\mathfrak{g}.\label{eq:intro-ordered}
\end{equation}
Proposition~\ref{prop:ordered} proves this estimate for arbitrary self-adjoint endomorphisms; neither the first Bianchi identity nor a trace condition is required. Inequality~\eqref{eq:intro-ordered} improves the control of the initial sums by using that $\eta_{i}$ are eigenvectors and, crucially, that the corresponding eigenvalues are ordered.

For the three curvature components, the total action is given by
\[
\sum^{\dim\mathfrak{g}}_{i=1}\|\eta_{i}\cdot R_{0}\|^{2}=4\kappa\|R_{0}\|^{2},\textrm{ where } \kappa=\begin{cases}
n-1, & R_{0}=W,\\
n+1, & R_{0}=B,\\
n+2, & R_{0}=R|_{\mathfrak{sp}\left(n\right)}-c \Id.
\end{cases}
\]
The curvature identities are being used to establish these formulas. Combining the total with \eqref{eq:intro-ordered} by summation by parts gives, for the restriction $R|_{\mathfrak{g}}$, the following inequality
\[
Q(R,R_{0})\geq6\|R_{0}\|^{2}\Sigma_{\frac{2\kappa}{3}}\left(R|_{\mathfrak{g}}\right).
\]
Integration of \eqref{eq:bochner} now gives parallel curvature. The coefficient 6 therefore yields an improvement by a factor of $\frac{4}{3}$ over the threshold $\frac{\kappa}{2}$ furnished by the old action bound of 8 with the same total action. 
In dimension 4, Theorem~\ref{thm:einstein} recovers the known 2-nonnegative Einstein rigidity statement discussed in \cite{PW21}. In the quaternionic case, we verify the total-action coefficient directly and point out the counting correction to \cite[Corollary~4.5]{PW22} in Remark~\ref{rem:quaternionic-normalization}.

The proof of \eqref{eq:intro-ordered} depends on the three Lie algebras as subalgebras of $\mathfrak{so}$. We will identify $\mathfrak{so}\left(V\right)$ with $\bigwedge^{2}V$ and use the inner product structure on $\bigwedge^{2}V$ which is half of the Hilbert-Schmidt for operators in $\mathfrak{so}\left(V\right)$. This allows us to control the structure constants of the Lie algebras and certain restricted sums of squares of structure constants. The details can be found in Section~\ref{sec:ordered-estimates} and do not depend on the individual structure of the three Lie algebras.

The three thresholds are sharp for the pointwise algebraic implication that $Q(R,R_{0})\geq0$. We exhibit a single four-dimensional Ricci-flat curvature block that embeds in all three geometries and produces, for every larger threshold, an admissible algebraic curvature tensor with a positive eigenvalue sum and a negative Bochner term; see Remark~\ref{rem:algebraic-sharpness}. This establishes the sharpness of the algebraic criterion used in the proof.

Tachibana-type rigidity also appears under other curvature conditions and in geometric evolution equations. Micallef and Wang \cite[Theorem~4.4]{MM93} proved that a closed Einstein four-manifold with nonnegative isotropic curvature is locally symmetric; Brendle \cite[Theorem~1]{Brendle2010} established the corresponding result in general. 
Böhm and Wilking \cite[Theorem~1]{BW08} proved that the normalized Ricci flow on a compact manifold with $2$-positive curvature operator converges to a metric of constant sectional curvature. 
In the case of Kähler-Einstein manifolds one can prove similar results using curvature assumptions related to sectional curvature, bisectional curvature and holomorphic sectional curvature (see \cite{Berger1966}, and \cite{Gray1977}). Notably Mok and Zhong in \cite{MokZhong1986} showed that such manifolds are locally symmetric provided the holomorphic bisectional curvature is nonnegative. They also offer an excellent introduction to prior results in this vein.
For quaternionic-Kähler manifolds the LeBrun-Salamon conjecture states that such manifolds are symmetric spaces provided the scalar curvature is positive (see \cite{LeBrunSalamon1994}). This remains an open question. Theorem \ref{thm:quaternionic} offers a partial answer as does the recent result of Brendle and Semmelmann (see \cite{BrendleSemmelmann2025}) where the authors assume that the sectional curvature is nonnegative.

There are further extensions beyond the Einstein and compact settings.
Petersen and Wink \cite[Theorem~B]{PW22} proved rigidity for compact Kähler manifolds with divergence-free Bochner tensor under $\lfloor(n+1)/2\rfloor$-nonnegativity of the Kähler curvature operator. Colombo, Mariani, and Rigoli \cite{ColomboMarianiRigoli2024} proved local symmetry for compact manifolds with harmonic curvature and $\lfloor(n-1)/2\rfloor$-nonnegative curvature operator. 
They also obtained complete-manifold versions under additional analytic hypotheses, including parabolicity, integral control of the Weyl tensor, or stronger positive lower bounds for the relevant averaged curvature. The improved thresholds in the present paper use the Einstein and special-holonomy assumptions.

The paper is organized as follows. Section~\ref{sec:preliminaries} fixes the curvature, action, and norm conventions. Section~\ref{sec:ordered-estimates} proves the action estimate \eqref{eq:intro-ordered}. Section~\ref{sec:applications} establishes the spectral comparison, proves the three theorems, and gives the common algebraic sharpness construction.

\textbf{AI Disclosure:} The authors used OpenAI’s ChatGPT as an assistive tool in the development of the paper. The authors take full responsibility for all mathematical arguments.

\textbf{Acknowledgments.} This work was initiated at the workshop {\em The Bochner Technique} at the American Institute of Mathematics (AIM) in May 2026. We would like to thank AIM for this opportunity and for their hospitality during our stay.

\section{Preliminaries}
\label{sec:preliminaries}

\subsection{Curvature tensors and curvature operators}

Let $V$ be a finite-dimensional real inner product space. An \emph{algebraic curvature tensor} $C$ on $V$ is a covariant $\left(0,4\right)$-tensor with the symmetries 
\[
C(x,y,z,w) =-C(y,x,z,w)=-C(x,y,w,z)=C(z,w,x,y)
\]
and the first Bianchi identity 
\[
C(x,y,z,w)+C(y,z,x,w)+C(z,x,y,w)=0.
\]
The inner product on $V$ induces one on $\bigwedge^{2}V$ by 
\[
\langle x\wedge y,z\wedge w\rangle=\langle x,z\rangle\langle y,w\rangle-\langle x,w\rangle\langle y,z\rangle.
\]
We use the same letter $C$ for the associated curvature operator, defined by 
\[
\langle C(x\wedge y),z\wedge w\rangle=C(x,y,z,w).
\]
The skew and pair symmetries are equivalent to this operator being self-adjoint. The first Bianchi identity is an additional condition.
Our convention makes the curvature operator of the unit sphere the identity, and the Ricci contraction is 
\[
\operatorname{Ric}_{C}(x,y)=\sum_{i}C(x,e_{i},y,e_{i}).
\]

We write $|C|$ for the tensor norm and $\|C\|$ for the Hilbert-Schmidt norm of the associated operator. Thus, in an orthonormal basis $e_{1},\ldots,e_{n}$ of $V$, we have 
\[
\|C\|^{2}=\tr(C^{*}C)=\sum_{a<b}|C(e_{a}\wedge e_{b})|^{2}
\]
and 
\[
|C|^{2}=4\|C\|^{2}. 
\]
We use operator norms throughout the estimates and the Bochner formula.
For an element $L\in\bigwedge^{2}V$ we use $|L|$ for its induced Euclidean norm.

\subsection{Skew-adjoint matrices and the commutator action}

Identify $\bigwedge^{2}V$ with the space $\mathfrak{so}(V)$ of skew-adjoint endomorphisms by 
\[
(x\wedge y)z=\langle x,z\rangle y-\langle y,z\rangle x.
\]
Under this identification, for $L,K\in\mathfrak{so}(V)$, 
\[
\langle L,K\rangle=-\frac{1}{2}\tr_{V}(LK).
\]
A \emph{Lie subalgebra} $\mathfrak{g}\subset\mathfrak{so}(V)$ is simply a vector subspace closed under the Lie bracket $[L,K]=LK-KL$.
For $L\in\mathfrak{g}$ write $\ad_{L}(X)=[L,X]$. Cyclicity of the trace gives 
\[
\begin{aligned}\langle[L,X],Y\rangle & =-\tfrac{1}{2}\tr\bigl((LX-XL)Y\bigr)\\
 & =-\tfrac{1}{2}\tr\bigl(X(YL-LY)\bigr) \\
 & =-\langle X,[L,Y]\rangle.
\end{aligned}
\]
Thus $\ad_{L}$ is skew-adjoint. This identity also shows that $\ad_{L}$ preserves both $\mathfrak{g}$ and its orthogonal complement in $\mathfrak{so}(V)$.

For a self-adjoint endomorphism $T$ of $\mathfrak{g}$, define 
\begin{equation}
L\cdot T=[\ad_{L},T],\quad(L\cdot T)X=[L,TX]-T[L,X].\label{eq:skew-action}
\end{equation}
Since $\ad_{L}$ is skew-adjoint, $L\cdot T$ is self-adjoint. If $T$ comes from a curvature tensor, this agrees with the standard Lie algebra action 
\[
(L\cdot T)(x_{1},x_{2},x_{3},x_{4})=-\sum^{4}_{r=1}T(x_{1},\ldots,Lx_{r},\ldots,x_{4}).
\]
Indeed, the induced action on $\bigwedge^{2}V$ sends $x\wedge y$ to $Lx\wedge y+x\wedge Ly=[L,x\wedge y]$, and moving this skew-adjoint map between the two slots gives \eqref{eq:skew-action}. Extending $T$ by zero on $\mathfrak{g}^{\perp}$ does not change $\|T\|$ or $\|L\cdot T\|$, because both summands are preserved by $\ad_{L}$.

The three algebras used below have concrete matrix descriptions: $\mathfrak{so}(n)$ consists of real skew-symmetric matrices; $\mathfrak{su}(n)$ consists of complex skew-Hermitian matrices of complex trace zero; and $\mathfrak{sp}(n)$ consists of quaternionic skew-Hermitian matrices. In the last two cases, $L^{*}$ denotes conjugate transpose and skew-Hermitian means $L^{*}=-L$. Regarded as real endomorphisms of $\mathbb{C}^{n}$ and $\mathbb{H}^{n}$, their induced inner products are 
\begin{equation}
\langle L,K\rangle=\begin{cases}
-\frac{1}{2}\tr_{\mathbb{R}}(LK), & \mathfrak{g}=\mathfrak{so}(n),\\
-\tr_{\mathbb{C}}(LK), & \mathfrak{g}=\mathfrak{su}(n),\\
-2\operatorname{Re}\tr_{\mathbb{H}}(LK), & \mathfrak{g}=\mathfrak{sp}(n).
\end{cases}\label{eq:trace-metrics}
\end{equation}
All three are restrictions of the same real trace metric. In particular, no rescaling is needed when applying a matrix estimate on $\mathfrak{so}(V)$ to either of the indicated subalgebras.

\subsection{The curvature operators in the holonomy reductions}

The curvature operator is supported on the holonomy algebra: its image is contained in that algebra, and self-adjointness makes it vanish on the orthogonal complement. It is enough to use the following parallel Lie-algebra bundles containing the holonomy algebra, even if the actual holonomy is a proper subalgebra.

For a Kähler manifold $(M^{2n},g,J)$, the real $(1,1)$-forms identify with $\mathfrak{u}(n)$, the skew-adjoint endomorphisms commuting with $J$. The Kähler form $\omega$ spans the center, and the primitive real $(1,1)$-forms are 
\[
\sideset{}{_{0}^{1,1}}\bigwedge=\omega^{\perp}\cap\sideset{}{_{\mathbb{R}}^{1,1}}\bigwedge\cong\mathfrak{su}(n).
\]
The full Kähler curvature operator is $K=R|_{\mathfrak{u}(n)}$. If $\pi_{0}$ is orthogonal projection onto $\sideset{}{_{0}^{1,1}}\bigwedge$, the primitive operator is the self-adjoint compression $\pi_{0} \circ K|_{\Lambda^{1,1}_{0}}$.
For a Kähler-Einstein metric this subspace is invariant, so $\pi_{0} \circ K|_{\Lambda^{1,1}_{0}}=R|_{\mathfrak{su}(n)}$.

For a quaternionic-Kähler manifold of real dimension $4n\geq8$, the holonomy lies in $\operatorname{Sp}(n)\operatorname{Sp}(1)$ and the metric is Einstein. The two commuting factors give orthogonal subalgebras
$\mathfrak{sp}(n)\oplus\mathfrak{sp}(1)\subset\mathfrak{so}(4n)$.
The quaternionic-Kähler curvature operator is $K=R|_{\mathfrak{sp}(n)\oplus\mathfrak{sp}(1)}$, and both summands are invariant. The restrictions and the curvature decompositions in these settings are specified in Section~\ref{sec:applications};
see also \cite[Sections 3 and 4]{PW22}.

\section{The summed action estimate}

\label{sec:ordered-estimates}

We consider a Lie subalgebra $\mathfrak{g}\subseteq\mathfrak{so}(V)$, with the induced inner product and commutator action from Section~\ref{sec:preliminaries}:
\begin{equation}
\langle L,X\rangle=-\frac{1}{2}\tr_{V}(LX),\quad L\cdot T=[\ad_{L},T].\label{eq:induced-lie-metric}
\end{equation}
Thus $|L|$ denotes the norm of an element of $\mathfrak{g}$, whereas $\|T\|$ denotes the Hilbert-Schmidt norm of an endomorphism. By \eqref{eq:trace-metrics}, this setting includes $\mathfrak{so}(n)$, $\mathfrak{su}(n)$, and $\mathfrak{sp}(n)$ with exactly the normalizations needed in the geometric applications.

\begin{proposition}[Ordered eigenvector estimate]\label{prop:ordered}
Let $\mathfrak{g}\subseteq\mathfrak{so}(V)$ carry the inner product \eqref{eq:induced-lie-metric}, and $N=\dim\mathfrak{g}$. Let $T\colon\mathfrak{g}\to\mathfrak{g}$ be any self-adjoint endomorphism.
Choose an orthonormal eigenbasis with 
\[
T\eta_{i}=\lambda_{i}\eta_{i},\textrm{ where } \lambda_{1}\leq\cdots\leq\lambda_{N}.
\]
It follows that 
\begin{equation}
\sum^{l}_{i=1}\|\eta_{i}\cdot T\|^{2}\leq6l\|T\|^{2}, \textrm{ for } 1\leq l\leq N.\label{eq:ordered-estimate}
\end{equation}
In particular, the estimate holds on $\mathfrak{so}(n)$, $\mathfrak{su}(n)$, and $\mathfrak{sp}(n)$ with the metrics above. No trace condition or curvature identity is imposed on $T$. \end{proposition}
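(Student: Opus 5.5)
The plan is to reduce \eqref{eq:ordered-estimate} to a bound on structure constants in the eigenbasis, and then use the ordering of the $\lambda_i$ to control how much each $\lambda_j^2$ can be charged.

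\emph{Step 1: an exact formula.} Put $c_{ijk}=\langle[\eta_i,\eta_j],\eta_k\rangle$. Since $\ad_L$ is skew-adjoint (Section~\ref{sec:preliminaries}), the $c_{ijk}$ are totally antisymmetric. From \eqref{eq:skew-action},
\[
\langle(\eta_i\cdot T)\eta_j,\eta_k\rangle=\lambda_j c_{ijk}-\langle[\eta_i,\eta_j],T\eta_k\rangle=(\lambda_j-\lambda_k)c_{ijk}.
\]
It follows that
\[
\sum_{i=1}^l\|\eta_i\cdot T\|^2=\sum_{i\le l}\ \sum_{j,k}(\lambda_j-\lambda_k)^2c_{ijk}^2,
\]
where each unordered pair $\{j,k\}$ is counted once in the Hilbert--Schmidt sum. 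In the same notation the target is $6l\|T\|^2=6l\sum_j\lambda_j^2$. I note that nothing about $\mathfrak{g}$ beyond its being a subalgebra of $\mathfrak{so}(V)$ with the trace metric enters here.

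\emph{Step 2: charging to $\lambda_j^2$.} I would first try the crude bound $(\lambda_j-\lambda_k)^2\le 2\lambda_j^2+2\lambda_k^2$ and regroup by the index carrying $\lambda_j^2$. This gives
\[
\sum_{i\le l}\|\eta_i\cdot T\|^2\le 2\sum_j\lambda_j^2\sum_{i\le l}\bigl|[\eta_i,\eta_j]\bigr|^2
=2\sum_j\lambda_j^2\sum_{i\le l}\bigl|\ad_{\eta_j}\eta_i\bigr|^2 .
\]
The inner sum is at most $l\,\|\ad_{\eta_j}\|_{\mathrm{op}}^2$, because $\eta_1,\dots,\eta_l$ are orthonormal. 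So the whole question becomes a commutator estimate: bound $|[X,Y]|$ for unit $X,Y\in\mathfrak{so}(V)$ in the metric $-\tfrac12\tr$. Restricting to a subalgebra only lowers the operator norm, since $\ad_X$ preserves $\mathfrak{g}$. I would prove this bound by diagonalizing $X$ into $2\times2$ rotation blocks with parameters $a_p$, $\sum_p a_p^2=1$, and computing $|[X,Y]|^2=\sum_{p,q}(\text{block terms})$ explicitly. For $\mathfrak{so}(V)$ this should give a bound of the shape $|[X,Y]|^2\le C|X|^2|Y|^2$ with $C$ between $1$ and $2$. That is where an absolute constant comes from.

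\emph{Step 3: using the ordering (main obstacle).} With $C$ around $2$, the crude route gives a constant near $4C$. To reach exactly $6$ one must exploit total antisymmetry together with the ordering $\lambda_1\le\cdots\le\lambda_N$. I would symmetrize over triples $a<b<c$ with $c_{abc}\ne0$. Such a triple contributes $(\lambda_b-\lambda_c)^2$ when $a\le l$, $(\lambda_a-\lambda_c)^2$ when $b\le l$, and $(\lambda_a-\lambda_b)^2$ when $c\le l$. Because $a<b<c$, the condition $c\le l$ forces $a,b\le l$ too. Hence the large differences involving indices above $l$ appear in at most two of the three slots. The plan is to bound each triple's contribution by a weighted combination of $\lambda_a^2,\lambda_b^2,\lambda_c^2$, with weights chosen so that, after summing over triples, each $\lambda_j^2$ is charged at most $6l$ times. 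The triple weights would be controlled by $\sum_{i\le l}|\ad_{\eta_j}\eta_i|^2\le l\,C$ from Step~2.

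Finding the right weights is the delicate point, and I expect this bookkeeping, and matching it with the sharp commutator constant, to be the real work. If it fails, the fallback is to expand $(\lambda_j-\lambda_k)^2=\lambda_j^2+\lambda_k^2-2\lambda_j\lambda_k$. The cross term is then a trace-type pairing $\sum_{i\le l}\tr(\ad_{\eta_i}T\ad_{\eta_i}T)$, which one tries to show is bounded below. That would reduce the coefficient from $2C$ toward $C$ plus a controlled remainder.
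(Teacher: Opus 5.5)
Step~3 of your proposal is only a plan, and the information you intend to feed into it cannot produce the constant $6$.

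\textbf{A normalization slip.} In Step~1 the Hilbert--Schmidt norm sums over \emph{ordered} pairs $(j,k)$, so each unordered pair contributes twice. The crude bound of Step~2 should therefore read $\sum_{i\le l}\|\eta_i\cdot T\|^2\le 4\sum_j\lambda_j^2 d_j$, where $d_j=\sum_{i\le l}|[\eta_i,\eta_j]|^2$. As written (factor $2$, then $d_j\le 2l$) you would get $4l\|T\|^2$. That is false: for the self-dual block with eigenvalues $-1,-1,2$ on $E_1,E_2,E_3$ one has $\|E_1\cdot R_*\|^2=36=6\|R_*\|^2$.

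\textbf{The main gap.} The only structural input you plan to use is total antisymmetry of $c_{ijk}$, the operator-norm bound $|[X,Y]|^2\le 2|X|^2|Y|^2$, and the ordering. These constraints allow a ratio near $8$ even for $l=1$. Take abstract constants $c_{1,a,m-1+a}=\sqrt2$ for $2\le a\le m$, extended antisymmetrically, with all others zero. Take $T=-1$ on $\eta_1,\dots,\eta_m$ and $T=+1$ on $\eta_{m+1},\dots,\eta_{2m-1}$. Then every $\ad_{\eta_j}$ is $\sqrt2$ times a partial isometry and the ordering holds. Yet $\|\eta_1\cdot T\|^2=16(m-1)$ while $\|T\|^2=2m-1$, so the ratio is already $6.4$ at $m=3$. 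Similar $\mathbb{Z}_m$-indexed data push the ratio toward $8l$ for every $l$. So no choice of triple weights can close the argument; you need a constraint that such data violate.

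\textbf{The missing ingredient.} It is finer information from the block diagonalization you mention. For unit $L$ with rotation parameters $a_p$, the eigenvalues of $-\ad_L^2$ are $(a_p\pm a_q)^2$, possibly $a_p^2$, and $0$. Since $\sum_p a_p^2=1$, the pairs with $a_p+a_q>1$ form a star or a triangle. This gives the \emph{excess bound} $\tr(-\ad_L^2-I)_+\le 2$, i.e.\ $\sum_{s=1}^k|[L,X_s]|^2\le k+2$ for every orthonormal family $X_1,\dots,X_k$. In the example above the excess is $2(m-1)$, which is why it is not realizable in $\mathfrak{so}(V)$.

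With $K_{ab}=\sum_{i\le l}c_{iab}^2$, the excess bound yields two estimates. The row bound is $d_a\le\beta=\min\{2l,l+2\}$. The cut bound is $\sum_{a\le h<b}K_{ab}\le\alpha h$ with $\alpha=\max\{l,2l-2\}$; for $h\ge l$ it comes from applying the excess bound to the off-diagonal block of $\ad_{\eta_i}$ between $H_h=\operatorname{span}\{\eta_1,\dots,\eta_h\}$ and $H_h^\perp$, and for $h<l$ a direct count suffices. With the correct normalization, the row bound alone already makes your crude Step~2 give $4(l+2)\le 6l$ for $l\ge 4$, so the ordering is needed only for $l\le 3$.

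\textbf{How the paper uses the ordering.} It enters not through triples but through the sign split $\lambda=\lambda^+-\lambda^-$ in $E(x)=\sum_{a<b}K_{ab}(x_a-x_b)^2$.
\begin{itemize}
\item For $\lambda^+$, the inequality $(x_a-x_b)^2\le x_a^2+x_b^2$ and the row bound give $E(\lambda^+)\le\beta|\lambda^+|^2$.
\item For the nonincreasing vector $\lambda^-$, use $(x_a-x_b)^2\le x_a^2-x_b^2$ for $a<b$ and Abel summation over the cuts. This gives $E(\lambda^-)\le\alpha|\lambda^-|^2$.
\item The triangle inequality for the seminorm $E^{1/2}$ and Cauchy--Schwarz then give $E(\lambda)\le(\alpha+\beta)\|T\|^2=3l\|T\|^2$.
\end{itemize}
Since the left side of the estimate equals $2E(\lambda)$, this gives $6l\|T\|^2$.
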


The order of the eigenvectors is essential to the argument. We first establish a bracket estimate that will also account for the cases $l=1$ and $l=2$.

\subsection{A bracket estimate for orthonormal families}

We start with an estimate that controls the sum of the squared norms of certain Lie brackets.

\begin{lemma}[Orthonormal bracket estimate]\label{lem:bracket-family}
For every unit $L\in\mathfrak{g}$ and every orthonormal family $X_{1},\ldots,X_{k}\in\mathfrak{g}$,
\begin{equation}
\sum^{k}_{s=1}|[L,X_{s}]|^{2}\leq k+2.\label{eq:bracket-family}
\end{equation}
\end{lemma}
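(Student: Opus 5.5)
The plan is to enlarge $\mathfrak{g}$ to all of $\mathfrak{so}(V)$ and reduce the claim to an eigenvalue computation for $-\ad_L^2$. An orthonormal family $X_1,\ldots,X_k$ in $\mathfrak{g}$ is also orthonormal in $\mathfrak{so}(V)$, and $|[L,X_s]|^2$ is computed with the same trace metric \eqref{eq:induced-lie-metric}. So it suffices to prove \eqref{eq:bracket-family} for $\mathfrak{g}=\mathfrak{so}(V)$. There $-\ad_L^2=\ad_L^*\ad_L$ is self-adjoint and nonnegative, and
\[
\sum_s|[L,X_s]|^2=\sum_s\langle -\ad_L^2X_s,X_s\rangle .
\]
By Ky Fan's maximum principle this is at most the sum $\mu_1+\cdots+\mu_k$ of the $k$ largest eigenvalues of $-\ad_L^2$. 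The lemma therefore reduces to the spectral inequality $\mu_1+\cdots+\mu_k\le k+2$.

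\textbf{Step 1: the spectrum.} Put $L$ in normal form $L=\sum_j a_j\,e_{2j-1}\wedge e_{2j}$. Then $|L|=1$ means $\sum_j a_j^2=1$. Write $b_j=|a_j|$. Complexifying, $L$ has eigenvalues $\pm\sqrt{-1}\,a_j$ together with $0$ on the kernel $V_0$. The eigenvalues of $\ad_L$ on $\bigwedge^2V\otimes\mathbb{C}$ are the pairwise sums of these. Hence the spectrum of $-\ad_L^2$ consists of:
\begin{itemize}
\item $(b_i+b_j)^2$ and $(b_i-b_j)^2$, each with multiplicity $2$, for every pair $i<j$;
\item $b_i^2$ with multiplicity $2\dim V_0$, for each $i$;
\item the value $0$ on the remaining directions, namely the planes $e_{2j-1}\wedge e_{2j}$ and $\bigwedge^2V_0$.
\end{itemize}
In the top-$k$ sum we may replace any $(b_i-b_j)^2$ by an unused copy of $(b_i+b_j)^2$, if available. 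Every term is then of the form $(b_i+b_j)^2$, with each unordered pair used at most twice, or $b_i^2$, or $0$.

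\textbf{Step 2: the combinatorial inequality.} It suffices to show $\sum_{\text{chosen}}(\mu-1)\le 2$. The terms $b_i^2-1$ and $0-1$ are nonpositive, so only the edges with $b_i+b_j>1$ contribute positively.

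Two such edges cannot be disjoint. If $\{i,j\}$ and $\{k,l\}$ were disjoint, then $b_i^2+b_j^2>\tfrac12$ and $b_k^2+b_l^2>\tfrac12$, forcing $\sum b^2>1$. So the positive edges form either a star or a triangle.

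In the star case, with centre $1$ and leaves $j\in J$, set $t=\sum_{j\in J}b_j^2\le 1-b_1^2$. Each leaf contributes at most
\[
(b_1+b_j)^2-1\le 2b_1b_j+b_j^2-t\le 2b_1b_j .
\]
Counting multiplicity $2$, the total is at most
\[
2\sum_{j\in J}\bigl(2b_1b_j+b_j^2\bigr)-2|J|\,t .
\]
We then bound this by $4b_1\sqrt{|J|\,t}+2t-2|J|t$ using Cauchy–Schwarz, and maximize over $b_1^2\le 1-t$. The target is $\le 2$; for instance $|J|=1$ gives $(b_1+b_2)^2\le 2$ for each of the two copies.

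In the triangle case we use symmetry: the contributions are $2\sum_{\text{pairs}}\bigl((b_i+b_j)^2-1\bigr)$ subject to $b_1^2+b_2^2+b_3^2\le 1$, maximized by a Lagrange or convexity argument.

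\textbf{Main obstacle.} The hard part is Step 2: making this elementary optimization clean and uniform, especially checking the star case for all $|J|$ and the triangle case. The sharpness examples suggest the extremal configuration is $b_1=b_2=1/\sqrt2$. There the two copies of $(b_1+b_2)^2=2$ give $k=2$ with sum $4=k+2$, and for $k=1$ the sum is $2<3$. The inequality I would aim to verify is
\[
\sum_{\text{chosen}}\bigl((b_i+b_j)^2-1\bigr)_+\le 2 .
\]
If the case analysis gets messy, I would first try bounding the star and triangle contributions jointly by $2\bigl(\max_{i<j}(b_i+b_j)^2-1\bigr)+(\text{corrections controlled by }1-b_i^2-b_j^2)$.
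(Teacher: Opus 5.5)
Your proposal follows the paper's proof essentially step for step: reduction to $\mathfrak{so}(V)$, the spectrum of $-\ad_L^2$, and the observation that pairs with $b_i+b_j>1$ pairwise intersect and hence form a star or a triangle, with Ky Fan's principle playing the role of the paper's bound $\tr(PH)\leq k+\tr(H-I)_{+}$. The optimization you left open closes directly from what you wrote, whereas the paper uses explicit sum-of-squares identities: in the star case with $m=|J|$, AM--GM gives $4\sqrt{mt(1-t)}\leq 2(1-t)+2mt$, so your bound is at most $2$; in the triangle case, with $s=b_1^2+b_2^2+b_3^2$, one has $\sum_{i<j\leq 3}(b_i+b_j)^2=s+(b_1+b_2+b_3)^2\leq 4s\leq 4$.
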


\begin{proof} 
Inclusion into $\mathfrak{so}(V)$ preserves the bracket and the inner product. It is therefore enough to prove the estimate in $\mathfrak{so}(V)$ itself.

Write $n=\dim V$ and $r=\lfloor \frac{n}{2}\rfloor$ and consider a unit $L\in\bigwedge^{2}V$. The eigenvalues of $L$ are purely imaginary and come in conjugate pairs
\[
\pm\sqrt{-1}a_{1},...,\pm\sqrt{-1}a_{r}\textrm{ with }a_{1}\geq\cdots\geq a_{r}\geq0, 
\]
with an additional zero eigenvalue when $n=2r+1$. Our normalization implies 
\[
|L|^{2}=-\frac{1}{2}\tr_{V}(L^{2})=\sum^{r}_{i=1}a^{2}_{i}=1.
\]
Under $\mathfrak{so}(V)=\bigwedge^{2}V$, the bracket acts by 
\[
\ad_{L}(x\wedge y)=Lx\wedge y+x\wedge Ly.
\]
The eigenvalues are given by $\pm\sqrt{-1}\left(a_{i}\pm a_{j}\right)$, $i<j$, and $\pm\sqrt{-1}a_{i}$ when $n$ is odd, and there is in addition an $r$-dimensional subspace in the kernel. The operator $H=-\ad^{2}_{L}$ will have nonnegative eigenvalues $\sigma_{\alpha}$ of the form $\left(a_{i}\pm a_{j}\right)^{2}$, $i\neq j$ and possibly $a^{2}_{i}$. We contend that
\[
\sum_{\alpha}\left(\sigma_{\alpha}-1\right)_{+}\leq2.
\]
Since $\left|L\right|^{2}=1$ we have that $0\leq a_{i}\leq1$. Thus we can discount the eigenvalues $a^{2}_{i}$ and $\left(a_{i}-a_{j}\right)^{2}$.
When taking multiplicities into account we see that
\[
\sum_{\alpha}\left(\sigma_{\alpha}-1\right)_{+}=2\sum_{i<j}\left(\left(a_{i}+a_{j}\right)^{2}-1\right)_{+}.
\]
Only pairs satisfying $a_i+a_j>1$ contribute to the sum.
Any two such pairs must have an index in common, since two disjoint pairs would give
\[
1\ge a_i^2+a_j^2+a_p^2+a_q^2
\ge \frac{(a_i+a_j)^2+(a_p+a_q)^2}{2}>1.
\]
Unless the sum is zero, the contributing pairs therefore either share a common index or consist of the three pairs among three indices. This is immediate if there is only one contributing pair.
Otherwise, relabel two distinct contributing pairs as $\{1,2\}$ and $\{1,3\}$.
Any contributing pair not containing $1$ must be $\{2,3\}$. If this pair occurs, every contributing pair must intersect all three and hence must itself be one of them; otherwise, all contributing
pairs contain $1$. In the first case, relabel the contributing pairs as $\{1,j\}$, $2\le j\le m+1$. Using $\sum_i a_i^2=1$, we obtain
\begin{align*}
    & \ \ \ 1 - \sum_{j=2}^{m+1}\left((a_1+a_j)^2-1\right) \\
    & = (m+1)\sum_{j=1}^{r}a_j^2 - \sum_{j=2}^{m+1}(a_1+a_j)^2\\
    & = (m+1)\left(a_1^2 + \sum_{j=2}^{m+1} a_j^2 +\sum_{j=m+2}^{r} a_j^2 \right) - ma_1^2 -2a_1 \sum_{j=2}^{m+1} a_j - \sum_{j=2}^{m+1} a_j^2 \\
    & = a_1^2 -2a_1 \sum_{j=2}^{m+1} a_j  + m \sum_{j=2}^{m+1} a_j^2 +(m+1)\sum_{j=m+2}^{r} a_j^2 \\
    & = \left(a_1 - \sum_{j=2}^{m+1} a_j \right)^2 + \sum_{2 \leq i < j \leq m+1} (a_i-a_j)^2+(m+1)\sum_{j=m+2}^{r} a_j^2 \\
    & \geq 0. 
\end{align*}
In the second case, relabel the three indices as $1,2,3$.
The same normalization gives
\[
1-\sum_{1\le i<j\le 3}\bigl((a_i+a_j)^2-1\bigr)
=\sum_{1\le i<j\le 3}(a_i-a_j)^2
  +4\sum_{j=4}^r a_j^2
\ge 0.
\]
Thus, in either case,
\[
\sum_\alpha(\sigma_\alpha-1)_+
=2\sum_{i<j}\bigl((a_i+a_j)^2-1\bigr)_+
\le 2.
\]

We can rewrite the inequality as
\begin{equation*}
\tr(H-I)_{+}\leq2,\label{eq:adjoint-spectral-excess}
\end{equation*}
where $(H-I)_{+}$ is obtained by replacing each negative eigenvalue of $H-I$ by zero. 

Let $P$ be the orthogonal projection onto the span of $X_{1},\ldots,X_{k}$.
The inequalities $H\leq I+(H-I)_{+}$ and $0\leq P\leq I$ imply 
\[
\begin{aligned}\sum^{k}_{s=1}|[L,X_{s}]|^{2} & =\sum^{k}_{s=1}\langle HX_{s},X_{s}\rangle=\tr(PH)\\
 & \leq k+\tr\bigl(P(H-I)_{+}\bigr)\leq k+\tr(H-I)_{+}\leq k+2.
\end{aligned}
\]
The second inequality just sums a positive semidefinite operator over part of an orthonormal basis; it does not require $P$ to commute with $H$. This proves \eqref{eq:bracket-family}. \end{proof}

\subsection{Two consequences of the bracket estimate}

This lemma gives us further control over $\ad_{L}$.

\begin{lemma}\label{lem:pair-and-offdiagonal} For unit vectors $L,X\in\mathfrak{g}$,
\begin{equation}
|\ad_{L}X|^{2}=|[L,X]|^{2}\leq2.\label{eq:pair-bracket}
\end{equation}
If a subspace $H\subseteq\mathfrak{g}$ of dimension $h$ contains $L$, then 
\begin{equation}
\bigl\| P_{H^{\perp}}\ad_{L}|_{H}\bigr\|^{2}\leq h,\label{eq:offdiagonal-bracket}
\end{equation}
where $P_{H^{\perp}}$ is orthogonal projection onto $H^{\perp}$.
\end{lemma}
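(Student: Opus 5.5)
Both inequalities should follow from Lemma~\ref{lem:bracket-family}, using that $\ad_{L}$ is skew-adjoint and kills $L$.

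\emph{Proof of \eqref{eq:pair-bracket}.} Let $X$ be a unit vector. Write $X=\langle X,L\rangle L+X'$ with $X'\perp L$. Since $[L,L]=0$, we have $[L,X]=[L,X']$ and $|X'|\leq1$. If $X'=0$ there is nothing to prove. Otherwise apply Lemma~\ref{lem:bracket-family} with $k=1$ to $X'/|X'|$; this only gives the bound $3$. To reach $2$, I would use the orthonormal family $X_{1}=L$, $X_{2}=X'/|X'|$ with $k=2$. Since $[L,X_{1}]=0$, the lemma gives
\[
|[L,X_{2}]|^{2}\leq 4,
\]
which is worse. So the family trick alone does not suffice, and I would instead reuse the spectral argument from the proof of the lemma. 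With $H=-\ad_{L}^{2}$, the largest eigenvalue is $\max_{i<j}(a_{i}+a_{j})^{2}\leq 2(a_{i}^{2}+a_{j}^{2})\leq 2$. Hence $|[L,X]|^{2}=\langle HX,X\rangle\leq 2$. This is a one-line consequence of the eigenvalue description already established there (the eigenvalues $a_{i}^{2}\leq1$ also fall under this bound).

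\emph{Proof of \eqref{eq:offdiagonal-bracket}.} Choose an orthonormal basis $X_{1}=L,X_{2},\dots,X_{h}$ of $H$. Then
\[
\bigl\|P_{H^{\perp}}\ad_{L}|_{H}\bigr\|^{2}=\sum_{s=1}^{h}|P_{H^{\perp}}[L,X_{s}]|^{2}=\sum_{s=2}^{h}|P_{H^{\perp}}[L,X_{s}]|^{2},
\]
because $[L,L]=0$. Bounding the projection trivially and applying Lemma~\ref{lem:bracket-family} to the $h-1$ vectors $X_{2},\dots,X_{h}$ gives
\[
\sum_{s=2}^{h}|[L,X_{s}]|^{2}\leq (h-1)+2=h+1,
\]
one too many. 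To recover the missing unit, I would use skew-adjointness. For $s\geq2$, the component of $[L,X_{s}]$ along $L$ vanishes, since $\langle [L,X_{s}],L\rangle=-\langle X_{s},[L,L]\rangle=0$. Instead, I would apply Lemma~\ref{lem:bracket-family} to the full family $X_{1},\dots,X_{h}$, where the $s=1$ term contributes $0$; this gives $\sum_{s\geq2}|[L,X_{s}]|^{2}\leq h+2$, which is even worse.

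So the sharper route is to use the $H$-internal part. We have
\[
\|P_{H^{\perp}}\ad_{L}|_{H}\|^{2}=\tr(P\,H_{L})-\|P\ad_{L}P\|^{2},
\]
where $P$ is the projection onto $H$ and $H_{L}=-\ad_{L}^{2}$. The key identity is that, since $\ad_{L}$ is skew-adjoint and $P\ad_{L}P$ vanishes on $L$, we get $\tr(PH_{L})=\|\ad_{L}P\|^{2}$. I would then bound $\tr(PH_{L})$ by $\tr\bigl(P'\,(I+(H_{L}-I)_{+})\bigr)$, with $P'$ the projection onto $H\ominus L$ (because $H_{L}L=0$). This yields $(h-1)+\tr(H_{L}-I)_{+}\leq h+1$. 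It remains to show that the excess $\tr(H_{L}-I)_{+}\leq2$ is never fully realized off-diagonally. The eigenvectors of $\ad_{L}$ with eigenvalues $\pm\sqrt{-1}(a_{i}+a_{j})$ come in $2$-dimensional real blocks on which $\ad_{L}$ acts as a rotation. Within such a block, any vector $X$ has $\ad_{L}X$ in the same block, and the excess $2((a_{i}+a_{j})^{2}-1)$ is split evenly between the two directions.

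The main obstacle is this last step. I expect to have to show that the contribution $\sum_{s}\langle (H_{L}-I)_{+}X_{s},X_{s}\rangle$ exceeding $1$ forces $H$ to contain enough of some rotation block that part of $\ad_{L}X_{s}$ lands back in $H$, so that it is subtracted by $\|P\ad_{L}P\|^{2}$. The first approach I would try is to pair the two directions in each block and use the inequality $\langle HX,X\rangle-|P\ad_{L}X|^{2}$ blockwise, in the form $\mu(|u|^{2}+|v|^{2})-\mu|\langle \ldots\rangle|^{2}$, to cap the net gain per block at half of its excess. Combined with the bound of $2$ on the total excess from the proof of Lemma~\ref{lem:bracket-family}, this would give the bound $h$.
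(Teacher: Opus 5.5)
Your argument for \eqref{eq:pair-bracket} is correct but differs from the paper's. You read the bound off the spectrum of $-\ad_L^2$ computed inside the proof of Lemma~\ref{lem:bracket-family}: every eigenvalue is $(a_i\pm a_j)^2\le 2(a_i^2+a_j^2)\le 2$, or $a_i^2\le 1$, or $0$. The paper instead uses only the statement \eqref{eq:bracket-family}. It pairs $X$ with $Y=\ad_LX/|\ad_LX|$. This is a unit vector orthogonal to $X$ with $|\ad_LY|\ge|\langle\ad_LY,X\rangle|=|\ad_LX|$, so the lemma with $k=2$ gives $2|\ad_LX|^2\le 4$. Your family attempt failed only because you chose $L$ as the companion vector. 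The right companion is the image $\ad_LX$, and this doubling is exactly what the second inequality needs.

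For \eqref{eq:offdiagonal-bracket} there is a genuine gap: you arrive at $h+1$ and leave the missing unit to a blockwise plan that is never carried out. As formulated, that plan cannot be checked. $H$ is an arbitrary subspace containing $L$, not a sum of $\ad_L$-invariant planes. The excess eigenvalues $(a_i+a_j)^2$ may also coincide; for $a_1=a_2=a_3=1/\sqrt3$ the eigenvalue $4/3$ has a six-dimensional eigenspace. So there are no canonical blocks in which to split the excess, and ``at most half the excess is realized'' is precisely the inequality to be proved.

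The missing idea is that the off-diagonal block appears twice in the skew-adjoint operator $\ad_L$. For $B=P_{H^\perp}\ad_L|_H$ the opposite block is $P_H\ad_L|_{H^\perp}=-B^*$, with $\|B^*\|=\|B\|$. Since $BL=0$, we have $r=\rank B\le h-1$. Choose orthonormal bases $X_1,\dots,X_r$ of $(\ker B)^\perp\subseteq H$ and $Y_1,\dots,Y_r$ of $\operatorname{im}B\subseteq H^\perp$. Together they form a single orthonormal family of $2r$ vectors, and Lemma~\ref{lem:bracket-family} yields
\[
2\|B\|^2=\sum_{s=1}^r\bigl(|BX_s|^2+|B^*Y_s|^2\bigr)\le\sum_{s=1}^r\bigl(|[L,X_s]|^2+|[L,Y_s]|^2\bigr)\le 2r+2,
\]
so $\|B\|^2\le r+1\le h$. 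The excess $2$ is paid once while $\|B\|^2$ is counted twice. This is the ``half the excess'' you were looking for, and no block decomposition is needed.
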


\begin{proof}
For \eqref{eq:pair-bracket} assume that $\ad_{L}X\neq0$
and consider 
\[
Y=\frac{\ad_{L}X}{\left|\ad_{L}X\right|}.
\]
This is a unit vector orthogonal to $X$, because $\ad_{L}$ is skew-adjoint.
The same property gives 
\[
|\ad_{L}Y|\geq|\langle\ad_{L}Y,X\rangle|=|\langle Y,\ad_{L}X\rangle|=|\ad_{L}X|.
\]
Applying \eqref{eq:bracket-family} to the orthonormal pair $\{X,Y\}$
yields 
\[
2|\ad_{L}X|^{2}\leq|\ad_{L}X|^{2}+|\ad_{L}Y|^{2}\leq4.
\]

For \eqref{eq:offdiagonal-bracket}, write 
\[
B=P_{H^{\perp}}\ad_{L}|_{H}\colon H\longrightarrow H^{\perp}.
\]
In a basis adapted to $H\oplus H^{\perp}$, this is an off-diagonal
block of $\ad_{L}$, and the opposite block is $-B^{*}=P_{H}\ad_{L}|_{H^{\perp}}$.
Since $BL=[L,L]=0$, we have
\[
r=\rank B=\rank B^{*}\leq h-1.
\]
The claim is immediate if $r=0$. Otherwise choose orthonormal bases
$X_{1},\ldots,X_{r}$ of $(\ker B)^{\perp}\subseteq H$ and $Y_{1},\ldots,Y_{r}$
of $\operatorname{im}B\subseteq H^{\perp}$. The two collections together
form an orthonormal family. Therefore 
\[
\begin{aligned}2\|B\|^{2} & =\sum^{r}_{s=1}\bigl(|B X_{s}|^{2}+|B^{*} Y_{s}|^{2}\bigr)\\
 & \leq\sum^{r}_{s=1}\bigl(|[L,X_{s}]|^{2}+|[L,Y_{s}]|^{2}\bigr)\\
 & \leq2r+2\leq2h,
\end{aligned}
\]
where the last line uses \eqref{eq:bracket-family}. \end{proof}

\subsection{The ordered estimate}

\begin{proof}[Proof of Proposition~\ref{prop:ordered}]
Let $V$ be a real Euclidean space and $T$ a self-adjoint operator on $\bigwedge^{2}V$. We select an ordered orthonormal eigenbasis of $T$, $T\left(\eta_{i}\right)=\lambda_{i}\eta_{i}$, $\lambda_{1}\leq\lambda_{2}\leq\cdots\leq\lambda_{N}$.

Fix $1\leq l\leq N$ and define the following partial sums of structure constants:
\begin{equation}
K_{ab}=\sum^{l}_{i=1}\langle[\eta_{i},\eta_{a}],\eta_{b}\rangle^{2},\qquad d_{a}=\sum^{N}_{b=1}K_{ab}=\sum^{l}_{i=1}|[\eta_{i},\eta_{a}]|^{2}.\label{eq:ordered-weights}
\end{equation}
As $\ad_{\eta_{i}}$ is skew-adjoint we obtain the symmetries $K_{ab}=K_{ba}$ and $K_{aa}=0$. 

\medskip{}
\noindent\emph{Step 1: row sums and off diagonal sums.} The goal is to estimate the sum of the row elements in $K_{ab}$ as well as the sum of the entries in an off diagonal block defined by the cut $a\leq h<b$.
Set
\begin{equation}
\alpha=\max\{l,2l-2\},\qquad\beta=\min\{2l,l+2\}.\label{eq:ordered-alpha-beta}
\end{equation}
For $l=1$ these constants are $\alpha=1$, $\beta=2$; for $l\geq2$ they are $\alpha=2l-2$, $\beta=l+2$. In particular, 
\begin{equation}
\alpha+\beta=3l.\label{eq:ordered-alpha-beta-sum}
\end{equation}
When $l=1$ the pair estimate \eqref{eq:pair-bracket} shows that $d_{a}\leq2$ and \eqref{eq:bracket-family} with $L=\eta_{a}$ and the orthonormal family $\eta_{1},\ldots,\eta_{j}$ implies $d_{a}\leq l+2$.
Hence 
\begin{equation*}
d_{a}\leq\beta, \text{ for } 1\leq a\leq N.\label{eq:ordered-row-bound}
\end{equation*}

For $1\leq h\leq N$, define the sum of the off-diagonal entries cut off by $a\leq h<b$
\[
C_{h}=\sum_{a\leq h<b}K_{ab}.
\]
We claim that 
\begin{equation}
C_{h}\leq\alpha h, \text{ for }1\leq h\leq N.\label{eq:ordered-cut-bound}
\end{equation}
If $h<l$, every $a\leq h$ is among the selected indices $1,\ldots,l$. Its self-bracket vanishes, so 
\[
C_{h}\leq\sum^{h}_{a=1}d_{a}=\sum^{h}_{a=1}\sum^{l}_{i=1,\,i\neq a}|[\eta_{i},\eta_{a}]|^{2}\leq 2h(l-1)\leq\alpha h.
\]
If $h\geq l$, put $H_{h}=\operatorname{span}\{\eta_{1},\ldots,\eta_{h}\}$.
Each selected vector $\eta_{i}$, $i\leq l$, belongs to $H_{h}$.
The rectangular matrix of $P_{H^{\perp}_{h}}\ad_{\eta_{i}}|_{H_{h}}$ has entries $\langle[\eta_{i},\eta_{a}],\eta_{b}\rangle$ with $a\leq h<b$.
Thus \eqref{eq:offdiagonal-bracket} gives 
\[
C_{h}=\sum^{l}_{i=1}\bigl\| P_{H^{\perp}_{h}}\ad_{\eta_{i}}|_{H_{h}}\bigr\|^{2}\leq lh\leq\alpha h.
\]
This proves \eqref{eq:ordered-cut-bound} in both cases.

\medskip{}
\noindent\emph{Step 2: write the commutator norms as a weighted sum.}
For a real vector $x=(x_{1},\ldots,x_{N})$, define 
\[
E(x)=\sum_{a<b}K_{ab}(x_{a}-x_{b})^{2}.
\]
Since $T$ is diagonal in our basis, 
\[
\langle[\ad_{\eta_{i}},T]\eta_{a},\eta_{b}\rangle=(\lambda_{a}-\lambda_{b})\langle[\eta_{i},\eta_{a}],\eta_{b}\rangle.
\]
Summing the squares of all matrix entries, and then using $K_{ab}=K_{ba}$, gives, with $\lambda=(\lambda_{1},\ldots,\lambda_{N})$, 
\begin{equation}
\sum^{l}_{i=1}\|\eta_{i}\cdot T\|^{2}=\sum_{a,b}K_{ab}(\lambda_{a}-\lambda_{b})^{2}=2E(\lambda).\label{eq:ordered-energy}
\end{equation}
The factor 2 counts the entries on both sides of the matrix diagonal.

\medskip{}
\noindent\emph{Step 3: estimate the positive and negative parts.}
We write each eigenvalue as the difference of its positive and negative parts 
\[
\lambda_{a}=\lambda^{+}_{a}-\lambda^{-}_{a},\quad\lambda^{-}_{a}=(-\lambda_{a})_{+},\quad\lambda^{+}_{a}=(\lambda_{a})_{+}.
\]
Clearly
\begin{equation}
\sum_{i}\left(\lambda^{+}_{i}\right)^{2}+\sum_{i}\left(\lambda^{-}_{i}\right)^{2}=\sum_{i}\lambda^{2}_{i}=\|T\|^{2}.\label{eq:ordered-parts-norm}
\end{equation}
For $\lambda^{+}=\left(\lambda^{+}_{1},...,\lambda^{+}_{N}\right)$, the elementary inequality $(\lambda^{+}_{a}-\lambda^{+}_{b})^{2}\leq\left(\lambda^{+}_{a}\right)^{2}+\left(\lambda^{+}_{b}\right)^{2}$
and the row bound \eqref{eq:ordered-row-bound} give 
\begin{equation}
E(\lambda^{+})\leq\sum_{a<b}K_{ab}\left(\left(\lambda^{+}_{a}\right)^{2}+\left(\lambda^{+}_{b}\right)^{2}\right)=\sum_{a}d_{a}\left(\lambda^{+}_{a}\right)^{2}\leq\beta|\lambda^{+}|^{2}.\label{eq:ordered-positive-part}
\end{equation}

For $\lambda^{-}=\left(\lambda^{-}_{1},...,\lambda^{-}_{N}\right)$, the ordering of the eigenvalues gives the additional information $\lambda^{-}_{1}\geq\cdots\geq\lambda^{-}_{N}\geq0$.
If $a<b$, then 
\[
(\lambda^{-}_{a}-\lambda^{-}_{b})^{2}\leq\left(\lambda^{-}_{a}\right)^{2}-\left(\lambda^{-}_{b}\right)^{2}=\sum^{b-1}_{i=a}\left(\left(\lambda^{-}_{i}\right)^{2}-\left(\lambda^{-}_{i+1}\right)^{2}\right).
\]
Let $\lambda^{-}_{N+1}=0$. Reversing the order of summation expresses the resulting bound in terms of the cuts $C_{h}$: 
\[
\begin{aligned}E(\lambda^{-}) & \leq\sum_{a<b}K_{ab}\left(\left(\lambda^{-}_{a}\right)^{2}-\left(\lambda^{-}_{b}\right)^{2}\right)\\
& =\sum^{N}_{h=1}\left(\left(\lambda^{-}_{h}\right)^{2}-\left(\lambda^{-}_{h+1}\right)^{2}\right)\sum_{a \leq h < b} K_{ab}\\
 & =\sum^{N}_{h=1}\left(\left(\lambda^{-}_{h}\right)^{2}-\left(\lambda^{-}_{h+1}\right)^{2}\right)C_{h}\\
 & \leq\alpha\sum^{N}_{h=1}h\left(\left(\lambda^{-}_{h}\right)^{2}-\left(\lambda^{-}_{h+1}\right)^{2}\right) \\
 &=\alpha\sum^{N}_{i=1}\left(\lambda^{-}_{i}\right)^{2}.
\end{aligned}
\]
Here all coefficients $\left(\lambda^{-}_{h}\right)^{2}-\left(\lambda^{-}_{h+1}\right)^{2}$ are nonnegative, so \eqref{eq:ordered-cut-bound} applies term by term. The last equality is a telescoping sum. We have proved 
\begin{equation}
E(\lambda^{-})\leq\alpha|\lambda^{-}|^{2}.\label{eq:ordered-negative-part}
\end{equation}

\medskip{}
\noindent\emph{Step 4: combine the two estimates.} 
The function $E^{1/2}$ is a semi-norm: it is the Euclidean norm of the vector 
\[
\bigl(\sqrt{K_{ab}}(x_{a}-x_{b})\bigr)_{a<b},
\]
which depends linearly on $x$. The triangle inequality for this semi-norm, followed
by Cauchy-Schwarz, gives 
\[
\begin{aligned}E\left(\lambda\right) & \leq\bigl(\sqrt{E(\lambda^{-})}+\sqrt{E(\lambda^{+})}\bigr)^{2}\\
 & \leq\bigl(\sqrt{\alpha}\,|\lambda^{-}|+\sqrt{\beta}\,|\lambda^{+}|\bigr)^{2}\\
 & \leq(\alpha+\beta)(|\lambda^{-}|^{2}+|\lambda^{+}|^{2})\\
 &=3l\|T\|^{2}.
\end{aligned}
\]
Together with \eqref{eq:ordered-energy}, this is \eqref{eq:ordered-estimate}.
\end{proof}

\section{Geometric applications and algebraic sharpness}

\label{sec:applications}

We apply Proposition~\ref{prop:ordered} to the Weyl, Bochner, and quaternionic Weyl operators. The ordered estimate is common to all three settings. The different geometric thresholds come from their total-action identities. We establish the spectral comparison and the integration argument in a unified way. The three theorems are immediate consequences of this one result. A single four-dimensional curvature block also gives the algebraic sharpness examples in all three settings.

\subsection{Curvature components and normalizations}

The notation used throughout this section is summarized below. Here $\mathfrak{g}$ is the parallel Lie-algebra bundle on which the trace-free curvature component $R_0$ is supported, and $N=\dim\mathfrak{g}$.
\begin{center}
\global\long\def\arraystretch{1.2}%
\begin{tabular}{lcccc}
\hline 
Geometry  & $\mathfrak{g}$  & $R_{0}$  & $N$  & $\kappa$\tabularnewline
\hline 
Einstein  & $\mathfrak{so}(n)$  & $W$  & $n(n-1)/2$  & $n-1$\tabularnewline
Kähler-Einstein  & $\mathfrak{su}(n)$  & $B$  & $n^{2}-1$  & $n+1$\tabularnewline
Quaternionic-Kähler  & $\mathfrak{sp}(n)$  & $R|_{\mathfrak{sp}\left(n\right)}-c\Id$ & $n(2n+1)$  & $n+2$\tabularnewline
\hline 
\end{tabular}
\par\end{center}

The assumptions are $n\geq4$ in the first row and $n\geq2$ in the other two rows. All norms and Lie-algebra inner products are those of Section~\ref{sec:preliminaries}.

For an Einstein metric the curvature decomposition in the real case is 
\begin{equation}
R=c\operatorname{Id}+W.\label{eq:einstein-decomposition}
\end{equation}
The identity $\operatorname{Id}$ on $\bigwedge^{2}TM$ is the curvature
operator of constant sectional curvature one, and $W$ is the Weyl
operator.

In the Kähler-Einstein case we choose the parallel tensor $R_{1}$ so that its restriction to $\mathfrak{su}\left(n\right)$ is $\operatorname{Id}$.
Thus
\begin{equation}
\begin{gathered}R=c R_{1}+B,\\
R|_{\mathfrak{su}\left(n\right)}=c\operatorname{Id}+B.
\end{gathered}
\label{eq:kahler-decomposition}
\end{equation}
The Bochner operator $B$ is trace-free on $\mathfrak{su}(n)$ and vanishes on $\mathfrak{u}(1)$. Elements of the center, $\mathfrak{u}(1)$, act trivially on $B$. See the Kähler curvature decomposition in \cite[Section~3]{PW22}.

For a quaternionic-Kähler metric we again choose $R_{1}$ so its restriction to $\mathfrak{sp}\left(n\right)$ is $\operatorname{Id}$. This gives us
\begin{equation}
\begin{gathered}R=cR_{1}+R_{0},\\
R|_{\mathfrak{sp}\left(n\right)}=c\operatorname{Id}+R_{0}.
\end{gathered}
\label{eq:quaternionic-decomposition}
\end{equation}
The quaternionic Weyl operator $R_{0}$ is trace-free on $\mathfrak{sp}(n)$
and vanishes on $\mathfrak{sp}(1)$; see \cite[Section~4]{PW22}.

Thus in all three cases 
\begin{equation}
R=c\operatorname{R_1}+R_{0},\quad R\big|_{\mathfrak{g}}=c\operatorname{Id}+R_{0}.\label{eq:common-curvature-decomposition}
\end{equation}
The scalar $c$ is constant, $\operatorname{Id}$ is parallel and
invariant under the relevant holonomy algebra, and $\nabla R=\nabla R_{0}$.
Since these metrics are Einstein, $R$ is divergence-free. Subtracting
the parallel tensor $c\operatorname{Id}$ shows that $R_{0}$ is also
divergence-free and satisfies the second Bianchi identity. The Bochner
formula~\eqref{eq:bochner} therefore applies to $R_{0}$.

\subsection{A New Weight Principle}

We first record the summation-by-parts statement that turns the ordered
action estimate into a lower bound for the Bochner term. For an ordered
list $\lambda_{1}\leq\cdots\leq\lambda_{N}$, the notation $\Sigma_{k}(\lambda)$
has the same fractional-sum meaning as $\Sigma_{k}(A)$ in the introduction.

\begin{lemma}[Comparison of weighted spectral sums] \label{lem:prefix-comparison}
Let $\lambda_{1}\leq\cdots\leq\lambda_{N}$ and $w_{i}\geq0$. Suppose
that $a\geq0$ and $1\leq k<N$ satisfy 
\[
\sum^{l}_{i=1}w_{i}\leq al, \text{ for } 1\leq l\leq N, \text{ and } \sum^{N}_{i=1}w_{i}=ak.
\]
Then 
\[
\sum^{N}_{i=1}\lambda_{i}w_{i}\geq a\Sigma_{k}(\lambda).
\]
\end{lemma}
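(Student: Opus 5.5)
The plan is to use Abel summation (summation by parts) to rewrite $\sum_i \lambda_i w_i$ in terms of the partial sums $W_l=\sum_{i=1}^l w_i$, and then to compare with an extremal weight vector. Put $W_0=0$ and $W_l=\sum_{i\le l} w_i$. Abel summation gives
\[
\sum_{i=1}^N\lambda_i w_i=\lambda_N W_N-\sum_{l=1}^{N-1}(\lambda_{l+1}-\lambda_l)W_l .
\]
The differences $\lambda_{l+1}-\lambda_l$ are nonnegative, and by hypothesis $W_N=ak$. So the expression becomes smaller when each $W_l$ is made larger, as long as $W_N$ stays fixed.

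The next step is to find an admissible upper bound for the $W_l$ with $l<N$. We have $W_l\le al$ by hypothesis. We also have $W_l\le W_N=ak$, since $w_i\ge 0$ makes the partial sums nondecreasing. Hence
\[
W_l\le a\min\{l,k\}=:W^*_l .
\]
Substituting this bound gives
\[
\sum\lambda_i w_i\ge \lambda_N\, ak-\sum_{l=1}^{N-1}(\lambda_{l+1}-\lambda_l)W^*_l .
\]
The right-hand side is exactly the Abel-summed form of $\sum_i \lambda_i w^*_i$, where $w^*$ has partial sums $W^*$ and $W^*_N=ak$ (this uses $k<N$). Explicitly, $w^*_i=a$ for $i\le\lfloor k\rfloor$, $w^*_{\lfloor k\rfloor+1}=a(k-\lfloor k\rfloor)$, and $w^*_i=0$ otherwise. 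Therefore
\[
\sum_i\lambda_i w^*_i=a\Sigma_k(\lambda).
\]

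The main care point is consistency at $l=N$. When $W^*$ is plugged back into the Abel formula, its last partial sum must equal $ak$, and this is why $k<N$ (or $k\le N$) is needed. Also, $\lambda_N$ may have either sign, but it multiplies the fixed quantity $W_N=ak$, so no sign condition is required there. The argument uses only monotonicity of the $\lambda_i$ and the two partial-sum bounds, so nothing beyond these elementary manipulations is needed.
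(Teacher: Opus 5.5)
Your proof is correct and is essentially the paper's argument. The paper likewise compares $w$ with the extremal weights $v_i$ (your $w^*_i$) via summation by parts, and shows $F_l=\sum_{i\le l}(v_i-w_i)\ge 0$ using exactly your two bounds, $W_l\le al$ and $W_l\le W_N=ak$.
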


\begin{proof} Put $r=\lfloor k\rfloor$ and define the balanced comparison
weights
\[
v_{i}=\begin{cases}
a, & i\leq r,\\
a(k-r), & i=r+1,\\
0, & i\geq r+2.
\end{cases}
\]
The comparison weights have the same total $ak$ as the $w_{i}$.
For $F_{l}=\sum^{l}_{i=1}(v_{i}-w_{i})$, we have $F_{0}=F_{N}=0$
and $F_{l}\geq0$.  When $l\leq r$ this is the assumed initial-sum
bound, and when $l\geq r+1$ it follows from equality of the totals
and $w_{i}\geq0$. Summation by parts gives 
\[
\sum^{N}_{i=1}\lambda_{i}(w_{i}-v_{i})=\sum^{N-1}_{l=1}(\lambda_{l+1}-\lambda_{l})F_{l}\geq0.
\]
Since $\sum_{i}\lambda_{i}v_{i}=a\Sigma_{k}(\lambda)$, the conclusion
follows. \end{proof}

A further trivial consequence of ordering the eigenvalues will be
used below. The averages $\Sigma_{k}(\lambda)/k$ are nondecreasing
in $k$. In particular, if $k<N$, $\sum_{i}\lambda_{i}=0$, and $\Sigma_{k}(\lambda)\geq0$,
then every $\lambda_{i}$ is zero: a proper initial average can equal
the full average only if the entire ordered list is constant. 

\subsection{A common curvature block and the total action}

The next construction will be used twice: to check the quaternionic
total-action coefficient and to give the common sharpness example.

\begin{lemma}[A four-dimensional curvature block] \label{lem:common-curvature-block}
For each of the three Lie algebras in the table there is an admissible
trace-free curvature component $R_{*}$ and an orthonormal triple
$E_{1},E_{2},E_{3}\in\mathfrak{g}$ such that 
\begin{equation}
R_{*}E_{1}=-E_{1},\quad R_{*}E_{2}=-E_{2},\quad R_{*}E_{3}=2E_{3}, \text{ and } R_{*}\big|_{\operatorname{span}\{E_{1},E_{2},E_{3}\}^{\perp}}=0.\label{eq:common-model-definition}
\end{equation}
Here admissible means Weyl in the real case, Bochner in the complex
case, and quaternionic Weyl in the quaternionic case. Its norm and
the three supporting action norms are 
\begin{equation}
\|R_{*}\|^{2}=6,\quad\|E_{1}\cdot R_{*}\|^{2}=\|E_{2}\cdot R_{*}\|^{2}=36,\quad\|E_{3}\cdot R_{*}\|^{2}=0.\label{eq:common-model-support-action}
\end{equation}
For an orthonormal basis $\{\eta_{i}\}$ of $\mathfrak{g}$ extending
this triple and diagonalizing $R_{*}$, with eigenvalues $\mu_{i}$,
one has 
\begin{equation}
\sum_{i}\|\eta_{i}\cdot R_{*}\|^{2}=24\kappa, \text{ and } \sum_{i}\mu_{i}\|\eta_{i}\cdot R_{*}\|^{2}=-72.\label{eq:common-model-total-action}
\end{equation}
\end{lemma}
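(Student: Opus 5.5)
The plan is to build $R_*$ from the self-dual Weyl tensor of $\mathbb{R}^4$ and then embed it in each geometry. In $\mathbb{R}^4$, take a quaternionic triple of orthogonal complex structures $I,J,K$ with $IJ=K$. In our normalization $|I|^2=-\frac12\tr(I^2)=2$, so put $E_1=I/\sqrt2$, $E_2=J/\sqrt2$, $E_3=K/\sqrt2$. These span one $\mathfrak{su}(2)$ factor of $\mathfrak{so}(4)$, and $[I,J]=2K$ gives
\[
[E_1,E_2]=\sqrt2\,E_3
\]
together with its cyclic permutations. Define $R_*$ by \eqref{eq:common-model-definition} on all of $\bigwedge^2\mathbb{R}^4$. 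A trace-free operator on $\bigwedge^2_+$, extended by zero to $\bigwedge^2_-$, is a Weyl tensor: it satisfies Bianchi and is Ricci-flat.

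\emph{Embedding in the three geometries.}
\begin{itemize}
\item Real case: use $\mathbb{R}^4\subset\mathbb{R}^n$ and extend $R_*$ by zero. The first Bianchi identity and Ricci-flatness persist, so $R_*$ is Weyl.
\item Kähler case: identify $\mathbb{R}^4=\mathbb{C}^2\subset\mathbb{C}^n$, choosing orientation so that the triple consists of the $2$-forms commuting with the complex structure. Then the triple spans $\mathfrak{su}(2)\subset\mathfrak{su}(n)$. $R_*$ is a Ricci-flat Kähler curvature tensor, so its extension by zero is Ricci-flat and Kähler, hence Bochner.
\item Quaternionic case: identify $\mathbb{R}^4=\mathbb{H}\subset\mathbb{H}^n$, with the triple acting as $\mathfrak{sp}(1)\subset\mathfrak{sp}(n)$ on that factor. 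This is a hyperkähler-type algebraic curvature tensor with values in $\mathfrak{sp}(n)$. It is Ricci-flat, and so it is a quaternionic Weyl tensor vanishing on the other $\mathfrak{sp}(1)$.
\end{itemize}
All inner products agree by \eqref{eq:trace-metrics}, so the same triple is orthonormal in each $\mathfrak{g}$.

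\emph{Norm and the three supporting actions.} First, $\|R_*\|^2=1+1+4=6$. Since $\ad_{E_i}$ is skew-adjoint and $\operatorname{span}\{E_a\}$ is closed under brackets, $\ad_{E_i}$ preserves both this span and its complement. Using the matrix formula $\langle(\eta\cdot T)\eta_a,\eta_b\rangle=(\lambda_a-\lambda_b)\langle[\eta,\eta_a],\eta_b\rangle$ from the proof of Proposition~\ref{prop:ordered}, only the block entries contribute.
\begin{itemize}
\item For $E_1$ the only nonzero entries are at $(E_2,E_3)$ and its transpose, with value $\pm(-1-2)\sqrt2$. This gives $\|E_1\cdot R_*\|^2=2\cdot18=36$, and the same holds for $E_2$.
\item For $E_3$ the relevant entry pairs $E_1,E_2$, whose eigenvalues are equal, so $\|E_3\cdot R_*\|^2=0$.
\end{itemize}

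\emph{Total action.} For the sum over a full basis $\eta_i$, expand
\[
\sum_i\|\eta_i\cdot R_*\|^2=\sum_i\sum_{a,b}(\mu_a-\mu_b)^2\langle[\eta_i,\eta_a],\eta_b\rangle^2 .
\]
Set $\mathcal K_{ab}=\sum_i\langle[\eta_i,\eta_a],\eta_b\rangle^2$. Since $\sum_b\mathcal K_{ab}=\sum_i|[\eta_i,\eta_a]|^2=\langle -\mathrm{Cas}\,\eta_a,\eta_a\rangle=c_{\mathfrak g}$ is the Casimir constant of $\mathfrak{g}$ in the trace metric (a single constant because $\mathfrak{g}$ is simple, or for $\mathfrak{so}(4)$ equal on both factors), the sum equals
\[
2c_{\mathfrak g}\sum_a\mu_a^2-2\sum_{a,b}\mu_a\mu_b\mathcal K_{ab}.
\]
Only block indices carry $\mu\neq0$. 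Also $\mathcal K_{ab}=\sum_i\langle\eta_i,[\eta_a,\eta_b]\rangle^2=|[E_a,E_b]|^2=2$ for $a\neq b$ in the block. Hence the sum is
\[
12c_{\mathfrak g}-8\cdot 2(\mu_1\mu_2+\mu_1\mu_3+\mu_2\mu_3)=12c_{\mathfrak g}+24 .
\]
It remains to verify $c_{\mathfrak g}=2\kappa-2$: namely $2n-4$, $2n$, $2n+2$ for $\mathfrak{so}(n)$, $\mathfrak{su}(n)$, $\mathfrak{sp}(n)$. This is done by evaluating $\sum_i|[\eta_i,X]|^2$ on one convenient unit $X$, for instance $e_1\wedge e_2$ or a diagonal element, in a standard orthonormal basis.

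This Casimir normalization check is the main obstacle, especially for $\mathfrak{sp}(n)$ with the factor $-2\operatorname{Re}\tr_{\mathbb H}$. It is exactly what certifies the quaternionic coefficient $\kappa=n+2$. Finally, the weighted sum follows at once: only $\mu=-1$ on $E_1,E_2$ carries nonzero action, and $E_3$ has zero action, so $\sum_i\mu_i\|\eta_i\cdot R_*\|^2=-36-36=-72$.
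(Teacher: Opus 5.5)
Your construction of $R_*$, the admissibility checks, $\|R_*\|^2=6$, the three supporting action norms, and the value $-72$ all follow the paper's proof essentially verbatim. The genuine difference is how you obtain $\sum_i\|\eta_i\cdot R_*\|^2=24\kappa$.

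The paper splits $\mathfrak g=\bigwedge^2_+\mathbb H\oplus\mathcal M\oplus\mathcal C$. It shows $|[X,E_i]|^2=\tfrac12|X|^2$ on the mixed part, so every unit $X\in\mathcal M$ contributes $6$. It then observes that $\mathcal C$ commutes with the block and counts $\dim\mathcal M=4q=4(\kappa-3)$. You instead use the identity
\[
\sum_i\|\eta_i\cdot T\|^2=2c_{\mathfrak g}\|T\|^2-2\sum_{a,b}\mu_a\mu_b\mathcal K_{ab}.
\]
It holds for every self-adjoint $T$ whenever the adjoint Casimir is a scalar, and it pushes all the $\kappa$-dependence into one Lie-theoretic constant. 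Your values $c_{\mathfrak g}=2n-4,\,2n,\,2n+2$ are right. You leave them unverified, but either check below works:
\begin{itemize}
\item They follow from the Killing forms $(n-2)\tr_{\mathbb R}$, $2n\tr_{\mathbb C}$, $(2n+2)\tr_{\mathbb C^{2n}}$ together with \eqref{eq:trace-metrics}. In particular $c_{\mathfrak g}$ is twice the dual Coxeter number $h^\vee$, and $\kappa=h^\vee+1$.
\item More elementarily, evaluate at $X=E_1$: $\sum_i|[\eta_i,E_1]|^2=2+2+\tfrac12\dim\mathcal M=2\kappa-2$. This is exactly the paper's mixed-block count.
\end{itemize}

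So your route is shorter and explains the three values of $\kappa$ uniformly. However, unless $c_{\mathfrak g}$ is computed by hand, it imports precisely the kind of normalization constant the paper checks directly in order to correct the quaternionic coefficient (Remark~\ref{rem:quaternionic-normalization}). The paper's count is longer but entirely self-contained.

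One arithmetic slip needs fixing. Since $\mathcal K_{ab}=2$ for $a\neq b$ in the block, you get $\sum_{a,b}\mu_a\mu_b\mathcal K_{ab}=4(\mu_1\mu_2+\mu_1\mu_3+\mu_2\mu_3)=-12$. The intermediate expression should therefore be $12c_{\mathfrak g}-8(\mu_1\mu_2+\mu_1\mu_3+\mu_2\mu_3)$, not $-8\cdot 2(\cdots)$, which would give $+48$. Your final value $12c_{\mathfrak g}+24=24\kappa$ is correct.
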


\begin{proof} The curvature operator is an operator on $\bigwedge^{2}V$,
we split $V=\mathbb{H}\oplus W$ , where $\mathbb{H}$ is a real 4-dimensional
subspace identified with the quaternions. For the complex and quaternionic
settings, take it to be a complex two-plane and a quaternionic line, respectively;
the complex and quaternionic structures act by right multiplication.
On this subspace let $E_{1},E_{2},E_{3}$ be left multiplication by
$\mathrm{i},\mathrm{j},\mathrm{k}$, divided by $\sqrt{2}$, and extend
them by zero. In terms of 4-dimensional geometry $E_{1},E_{2},E_{3}$
span the self-dual component in the splitting $\bigwedge^{2}\mathbb{H}=\bigwedge^{2}_{+}\mathbb{H}\oplus\bigwedge^{2}_{-}\mathbb{H}$.
They are orthonormal for $-\tfrac{1}{2}\operatorname{tr}_{\mathbb{R}}$
and satisfy 
\[
[E_{1},E_{2}]=\sqrt{2}E_{3},\quad\text{etc., cyclically},\text{ and } E^{2}_{i}=-\tfrac{1}{2}\operatorname{Id}\quad\text{on }\mathbb{H}.
\]

Define $R_{*}$ by~\eqref{eq:common-model-definition}, or equivalently
by $R_{*}=-E_{1}\otimes E_{1}-E_{2}\otimes E_{2}+2E_{3}\otimes E_{3}$,
where $(E\otimes E)X=\langle E,X\rangle E$. To verify the curvature
identities, orient $\mathbb{H}$ so that the corresponding 2-forms
$E_{i}$ are self-dual. The $E_{i}\wedge E_{i}$ are then the same
volume form for each $i$. The Bianchi identity for $\sum_{i}\mu_{i}E_{i}\otimes E_{i}$
vanishes because $\mu_{1}+\mu_{2}+\mu_{3}=0$. The Ricci contraction
also vanishes: the Ricci endomorphism of $E_{i}\otimes E_{i}$ is
$-E^{2}_{i}$, so on the supporting 4-plane it is 
\[
\operatorname{Ric}_{R_{*}}=-\sum^{3}_{i=1}\mu_{i}E^{2}_{i}=\tfrac{1}{2}(\mu_{1}+\mu_{2}+\mu_{3})\operatorname{Id}=0.
\]
Extending the tensor by zero preserves both identities. Thus $R_{*}$
is trace free and hence a Weyl tensor in every ambient real dimension.

Left multiplication commutes with right multiplication. In the complex
setting the $E_{i}$ commute with the complex structure and have complex
trace zero, since each is a scalar multiple of a commutator of the
other two. They are therefore primitive real $(1,1)$-forms, and $R_{*}$
is a Ricci-flat Kähler curvature tensor, hence a Bochner tensor. In
the quaternionic setting they lie in $\mathfrak{sp}(n)$ and commute
with all three quaternionic structures. The same Ricci-flat algebraic
curvature tensor is consequently a quaternionic Weyl tensor. These
assertions also follow from the curvature decompositions in \cite[Sections 3 and 4]{PW22}.

The norm $\|R_{*}\|^{2}=1+1+4=6$ is immediate. The adjoint action
of $E_{1}$ rotates the $E_{2},E_{3}$ plane with magnitude $\sqrt{2}$,
so 
\[
\|E_{1}\cdot R_{*}\|^{2}=2(\sqrt{2})^{2}((-1)-2)^{2}=36.
\]
The same calculation gives the action of $E_{2}$, while $E_{3}$
rotates the $-1$-eigenspace $\mathrm{span}\left(E_{1},E_{2}\right)$
and acts trivially on $R_{*}$.

We can count the remaining action norms in all three settings at once.
Regard the support as $\mathbb{R}^{4}$, $\mathbb{C}^{2}$, or $\mathbb{H}^{1}$,
and let $q$ be the dimension of $W$ over the corresponding field/algebra.
Thus 
\[
q=n-4,\quad n-2,\quad n-1,\quad\text{respectively, and in every case }q=\kappa-3.
\]
We split $\bigwedge^{2}V=\bigwedge^{2}_{+}\mathbb{H}\oplus\mathcal{M}\oplus\mathcal{C}$,
where $\mathcal{M}$ consists of mixed skew-adjoint matrices between
$\mathrm{span}\left\{E_{1},E_{2},E_{3}\right\}$ and $W$ and has real dimension
$4q$. $\mathcal{C}$ is the orthogonal complement to $\bigwedge^{2}_{+}\mathbb{H}\oplus\mathcal{M}$.
In block diagonal form we have 
\[
X_{F}=\begin{pmatrix}0 & -F^{*}\\
F & 0
\end{pmatrix}\in\mathcal{M}\textrm{ and }E_{i}=\begin{pmatrix}E_{i}|_{\mathbb{H}} & 0\\
0 & 0
\end{pmatrix}.
\]
The commutator $[E_{i},X_{F}]$ is the mixed matrix corresponding
to $-FE_{i}$. Since $E^{*}_{i}E_{i}=\tfrac{1}{2}\operatorname{Id}$,
the induced real matrix norms give 
\begin{equation}
|[X,E_{i}]|^{2}=\tfrac{1}{2}|X|^{2}, \text{ for } X\in\mathcal{M},\ i=1,2,3.\label{eq:common-model-mixed-bracket}
\end{equation}
These brackets lie outside $\operatorname{span}\{E_{1},E_{2},E_{3}\}$.
The two off-diagonal blocks of $[\operatorname{ad}_{X},R_{*}]$ consequently
have equal squared norms, and for every unit $X\in\mathcal{M}$, 
\[
\|X\cdot R_{*}\|^{2}=2\sum^{3}_{i=1}\mu^{2}_{i}|[X,E_{i}]|^{2}=6.
\]
The elements of $\mathcal{C}$ commute with $\operatorname{span}\{E_{1},E_{2},E_{3}\}$
and act trivially on $R_{*}$. For example, the $\bigwedge^{2}_{-}\mathbb{H}$
factor of $\bigwedge^{2}\mathbb{H}$ commutes with $\operatorname{span}\{E_{1},E_{2},E_{3}\}=\bigwedge^{2}_{+}\mathbb{H}$.
Therefore 
\[
\sum_{i}\|\eta_{i}\cdot R_{*}\|^{2}=36+36+4q\cdot6=24(q+3)=24\kappa.
\]
All elements of $\mathcal{M}\oplus\mathcal{C}$ have $R_{*}$-eigenvalue
zero, while the positive eigenvector $E_{3}$ has zero action. Hence
the eigenvalue-weighted sum is $-36-36=-72$, as asserted. \end{proof}

\begin{proposition}[Total action of the geometric curvature components]
\label{prop:total-action} For the curvature component $R_{0}$ and
constant $\kappa$ in the table, and any orthonormal basis $\eta_{1},\ldots,\eta_{N}$
of $\mathfrak{g}$, 
\begin{equation}
\sum^{N}_{i=1}\|\eta_{i}\cdot R_{0}\|^{2}=4\kappa\|R_{0}\|^{2}.\label{eq:total-action}
\end{equation}
Unlike Proposition~\ref{prop:ordered}, this identity is asserted
for the indicated geometric curvature tensors, not for arbitrary self-adjoint
endomorphisms of $\mathfrak{g}$. \end{proposition}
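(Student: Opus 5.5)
Since $\sum_{i=1}^N\|\eta_i\cdot R_0\|^2$ is independent of the orthonormal basis, we write it as a quadratic form in $R_0$ and identify it with a Casimir-type expression. By \eqref{eq:skew-action},
\[
\sum_i\|\eta_i\cdot R_0\|^2=-\sum_i\tr\bigl([\ad_{\eta_i},R_0]^2\bigr),
\]
which is the quadratic form of the Casimir operator $\mathcal{C}_{\mathfrak{g}}=-\sum_i(\eta_i\cdot)^2$ acting on $\operatorname{Sym}^2\mathfrak{g}$. This form is $\operatorname{Ad}$-invariant and does not depend on the basis. The space of admissible trace-free components is $\mathfrak{g}$-invariant: Weyl tensors for $\mathfrak{so}(n)$, Bochner tensors for $\mathfrak{su}(n)$, and quaternionic Weyl tensors, i.e. the $\mathfrak{sp}(n)$-part of $\operatorname{Sym}^4$ of $\mathbb{C}^{2n}$, for $\mathfrak{sp}(n)$. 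In each case this space is irreducible; the only exception is $n=4$ in the real case, where $W=W^+\oplus W^-$. The Casimir therefore acts by a single scalar $\gamma$, and the total action equals $\gamma\|R_0\|^2$. In dimension four the scalars on $W^\pm$ agree because the orientation-reversing symmetry exchanges them, so the conclusion still holds there.

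The scalar is then computed on a single nonzero element. Lemma~\ref{lem:common-curvature-block} supplies exactly such an element: $R_*$ is admissible, $\|R_*\|^2=6$, and \eqref{eq:common-model-total-action} gives a total action of $24\kappa$. Hence $\gamma=24\kappa/6=4\kappa$, which is \eqref{eq:total-action}. The argument therefore reduces to two facts: that the form is a Casimir scalar on an irreducible module, and the explicit count in the lemma.

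The main obstacle is irreducibility, together with the claim that the Casimir is a scalar on the admissible space with respect to this particular normalization of the metric. For the real and Kähler cases these are classical facts; see \cite{Besse1987} for the $\mathfrak{so}(n)$-irreducibility of Weyl tensors when $n\ge5$ and for the $\mathfrak{u}(n)$-irreducibility of Bochner tensors. For the quaternionic case, we will use that the quaternionic Weyl tensor is identified with $\operatorname{Sym}^4_{\mathbb{C}}\mathbb{C}^{2n}$ as an $\operatorname{Sp}(n)$-representation \cite{Salamon1982,Alekseevskii1968}, which is irreducible. The normalization of the Casimir cancels automatically, because we calibrate on $R_*$ rather than evaluating a Casimir eigenvalue formula.

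If one wants to avoid representation theory, we would instead compute $\sum_i\|\eta_i\cdot R_0\|^2$ directly in the tensor picture. Expand $\eta_i\cdot R_0$ as the sum of four slot terms and use completeness, $\sum_i\eta_i\otimes\eta_i$ equal to the projection onto $\mathfrak{g}$. The diagonal terms give a multiple of $|R_0|^2$ whose coefficient involves $\dim V$. The cross terms reduce, via the first Bianchi identity and Ricci-flatness, to $-|R_0|^2$ times a constant plus terms that vanish. This is the route of \cite[Corollary~4.5]{PW22}, but it is easy to miscount in the quaternionic case, so we would cross-check the final constant against $R_*$ in any event.
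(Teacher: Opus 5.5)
Your argument is correct. For the quaternionic row it is exactly the paper's proof: an invariant quadratic form on an irreducible module is a multiple of $\|R_0\|^2$, and the multiple is read off from the block $R_*$ of Lemma~\ref{lem:common-curvature-block}, giving $24\kappa/6=4\kappa$.

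Where you differ is in the real and Kähler rows. There the paper does not rerun the Schur argument. It cites the direct tensor computations of \cite[Proposition~2.5(b)]{PW21} and \cite[Corollary~3.3]{PW22}, which use the first Bianchi identity and vanishing Ricci contraction and hold in every dimension without representation theory. It only adds that the center of $\mathfrak{u}(n)$ contributes a zero term.

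Your uniform route gives a self-contained derivation of all three constants from one calibrating tensor, and it avoids the kind of miscount flagged in Remark~\ref{rem:quaternionic-normalization}. The price is that you need an irreducibility input in each case and a separate treatment of $n=4$.

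Your $n=4$ fix works, but you should also say why there are no cross terms. Since $\Lambda^2_{\pm}$ acts trivially on $W^{\mp}$, the form is $\gamma_+\|W^+\|^2+\gamma_-\|W^-\|^2$. The equality $\gamma_+=\gamma_-$ then uses that the form is $O(4)$-invariant, because conjugation by an orientation-reversing isometry preserves $\mathfrak{so}(4)$ and its metric. More simply, note that Weyl tensors are $O(4)$-irreducible and apply Schur directly.

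Finally, there is a sign slip. $[\ad_{\eta_i},R_0]$ is self-adjoint, so $\|\eta_i\cdot R_0\|^2=+\tr\bigl([\ad_{\eta_i},R_0]^2\bigr)$. Your Casimir identification is still right, because $T\mapsto\eta_i\cdot T$ is skew-adjoint for the trace inner product.
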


\begin{proof} For the Riemannian Weyl operator this is \cite[Proposition~2.5(b)]{PW21};
the first Bianchi identity and zero Ricci contraction are essential
hypotheses. For the Bochner operator it is the identity used in \cite[Corollary~3.3]{PW22}.
The center of $\mathfrak{u}(n)$ acts trivially on $B$, so restricting
the sum to $\mathfrak{su}(n)$ removes only a zero term. Passing between
full tensor norms and operator norms multiplies both sides by the
same factor and does not change either coefficient.

For the quaternionic case, we verify the coefficient directly. The
space of trace free algebraic curvature tensors on $\mathfrak{sp}\left(n\right)$
is an irreducible real $\operatorname{Sp}(n)$ module on which $\operatorname{Sp}(1)$
acts trivially; see \cite[Section~4]{PW22}. The quadratic form $R_{0}\mapsto\sum_{i}\|\eta_{i}\cdot R_{0}\|^{2}$
is invariant and independent of the chosen orthonormal basis. The
self-adjoint operator representing this quadratic form commutes with
the group action, so each of its eigenspaces is invariant. Irreducibility
forces a single eigenvalue; the quadratic form is therefore
multiple of $\|R_{0}\|^{2}$. Evaluating on the admissible nonzero
block of Lemma~\ref{lem:common-curvature-block} gives 
\[
\frac{\sum_{i}\|\eta_{i}\cdot R_{*}\|^{2}}{\|R_{*}\|^{2}}=\frac{24(n+2)}{6}=4(n+2).
\]
This proves the quaternionic assertion and completes the proof. \end{proof}

\begin{remark}[The quaternionic normalization] \label{rem:quaternionic-normalization}
Corollary~4.5 of \cite{PW22} lists the coefficient $\tfrac{4}{3}(3n+4)$
in place of $4(n+2)$, the discrepancy is due to minor combinatorial error. 
\end{remark}

\subsection{The Bochner argument and the three geometric theorems}

Theorems \ref{thm:einstein}, \ref{thm:kahler}, and \ref{thm:quaternionic}
from the introduction can now be proven with a unified proof.

\begin{lemma}[Common Bochner comparison] \label{lem:common-bochner-comparison}
In any row of the table, set $k=\frac{2\kappa}{3}$ and let $R|_{\mathfrak{g}}=c\operatorname{Id}+R_0$
be the operator in \eqref{eq:common-curvature-decomposition}. Then
\begin{equation}
Q(R,R_{0})\geq6\|R_{0}\|^{2}\Sigma_{k}(R|_{\mathfrak{g}}).\label{eq:bochner-term-lower-bound}
\end{equation}
If the manifold is closed and connected and $\Sigma_{k}(R|_{\mathfrak{g}})\geq0$
everywhere, then $\nabla R=0$. If additionally $\Sigma_{k}(R|_{\mathfrak{g}})>0$
at one point, then $R=cR_1$ with $c>0$. \end{lemma}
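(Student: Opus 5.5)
The plan is to apply Lemma~\ref{lem:prefix-comparison} with weights $w_i=\|\eta_i\cdot R_0\|^2$, where $\eta_1,\dots,\eta_N$ is an ordered orthonormal eigenbasis of $R|_{\mathfrak g}$ with eigenvalues $\lambda_1\le\cdots\le\lambda_N$. Since $R|_{\mathfrak g}=c\operatorname{Id}+R_0$, the same basis diagonalizes $R_0$, with eigenvalues $\lambda_i-c$. These are ordered in the same way, so Proposition~\ref{prop:ordered} applies to $T=R_0$. It gives $\sum_{i\le l}w_i\le 6l\|R_0\|^2$ for all $l$. Proposition~\ref{prop:total-action} gives the total $\sum_i w_i=4\kappa\|R_0\|^2=6\|R_0\|^2\cdot\frac{2\kappa}{3}$.

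If $R_0=0$ at a point, \eqref{eq:bochner-term-lower-bound} is trivial there. Otherwise we take $a=6\|R_0\|^2$ and $k=2\kappa/3$, and check that $1\le k<N$ in each row: $\kappa\ge3$ (for $n\ge4$ in the real case, $n\ge2$ in the others), and $2\kappa/3<N$ is clear from the table. Lemma~\ref{lem:prefix-comparison} then yields $Q(R,R_0)=\sum\lambda_iw_i\ge 6\|R_0\|^2\Sigma_k(R|_{\mathfrak g})$. Here $Q$ is computed with the $\eta_i$ in $\mathfrak g$, as justified in the introduction: the remaining basis elements, in $\mathfrak u(1)$ or $\mathfrak{sp}(1)$, act trivially on $R_0$.

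For the rigidity, we integrate \eqref{eq:bochner} over the closed manifold. The Laplacian term integrates to zero, so
\[
0=\int_M\|\nabla R_0\|^2+\tfrac12\int_M Q(R,R_0).
\]
Both integrands are nonnegative when $\Sigma_k\ge0$, hence $\nabla R_0=0$ and $\nabla R=\nabla R_0=0$. In addition $Q(R,R_0)\equiv0$, so $\|R_0\|^2\Sigma_k(R|_{\mathfrak g})\equiv0$.

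Now suppose $\Sigma_k>0$ at some point $p$. Then $R_0(p)=0$. Since $\nabla R_0=0$, the function $\|R_0\|$ is constant on the connected manifold, so $R_0\equiv0$ and $R=cR_1$. At $p$ we have $R|_{\mathfrak g}=c\operatorname{Id}$, so $\Sigma_k=kc>0$ and therefore $c>0$.

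The main obstacle is bookkeeping rather than analysis. We must ensure that the ordered estimate is applied to $R_0$ using the ordering of $R|_{\mathfrak g}$, which is legitimate because the two differ by a multiple of the identity. We must also confirm that $k$ lies in the range required by Lemma~\ref{lem:prefix-comparison}. For the fractional $k$, the lemma's proof already handles non-integer $k$, so nothing more is needed.
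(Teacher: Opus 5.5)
Your proposal is correct and follows the paper's proof essentially step for step. Both arguments use an ordered eigenbasis shared by $R_0$ and $R|_{\mathfrak g}$, feed Proposition~\ref{prop:ordered} and the total-action identity into Lemma~\ref{lem:prefix-comparison} with $a=6\|R_0\|^2$, then integrate the Bochner formula and use that $R_0$ is parallel on a connected manifold. Your explicit check that $1\le k<N$ in each row is a small addition that the paper leaves implicit.
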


\begin{proof} Choose a pointwise ordered orthonormal eigenbasis of
$R_{0}$ on $\mathfrak{g}$, with eigenvalues $\mu_{1}\leq\cdots\leq\mu_{N}$.
It is also an ordered eigenbasis of $R|_{\mathfrak{g}}$, with eigenvalues
$\lambda_{i}=c+\mu_{i}$. Proposition~\ref{prop:ordered} and \eqref{eq:total-action}
give 
\[
\sum^{l}_{i=1}\|\eta_{i}\cdot R_{0}\|^{2}\leq6l\|R_{0}\|^{2}, \text{ for } 1\leq l\leq N,
\]
and 
\[
\sum^{N}_{i=1}\|\eta_{i}\cdot R_{0}\|^{2}=4\kappa\|R_{0}\|^{2}=6k\|R_{0}\|^{2}, \text{ for } 1\leq k<N.
\]
The additional central direction in the Kähler case
and the $\mathfrak{sp}(1)$ directions in the quaternionic case act
trivially on $T$. Thus the full curvature term in~\eqref{eq:bochner}
is 
\[
Q(R,R_{0})=\sum^{N}_{i=1}\lambda_{i}\|\eta_{i}\cdot R_{0}\|^{2}.
\]
Lemma~\ref{lem:prefix-comparison}, applied with $a=6\|R_{0}\|^{2}$,
proves \eqref{eq:bochner-term-lower-bound}.

Assuming $\Sigma_{k}(R|_{\mathfrak{g}})\geq0$, the Bochner identity~\eqref{eq:bochner}
and integration give 
\[
0=\int_{M}\left(\|\nabla R_{0}\|^{2}+\tfrac{1}{2}Q(R,R_{0})\right)\,d\operatorname{vol}_{g}.
\]
Both summands are nonnegative continuous functions, so $\nabla R_{0}=0$
and $Q(R,R_{0})=0$ everywhere. Since $cR_1$ is parallel,
$\nabla R=0$. If $\Sigma_{k}(R|_{\mathfrak{g}})(p)>0$, then \eqref{eq:bochner-term-lower-bound}
forces $R_{0}(p)=0$. Since $R_0$ is parallel and $M$ is connected it follows that $R_{0}=0$
everywhere. Consequently $R|_{\mathfrak{g}}=c\operatorname{Id}$ and
$0<\Sigma_{k}(R|_{\mathfrak{g}})(p)=kc$, proving $c>0$. \end{proof}

\begin{remark}[Common algebraic sharpness] \label{rem:algebraic-sharpness}
The block in Lemma~\ref{lem:common-curvature-block} gives the sharpness
examples for all three settings. It first shows that the coefficient
6 in Proposition~\ref{prop:ordered} cannot be decreased: the first
two eigenvalues of $R_{*}$ are $-1,-1$, and \eqref{eq:common-model-support-action}
gives equality for both $l=1$ and $l=2$.

For the geometric algebraic implication, set 
\[
R_{c}=cR_{1}+R_{*},\qquad c>0.
\]
These are admissible algebraic Einstein curvature tensors of the respective
types. The spectrum on $\mathfrak{g}$ is 
\begin{center}
\begin{tabular}{|c|c|}
\hline
Eigenvalue & Multiplicity \\
\hline
$c-1$ & $2$   \\
\hline
$c$   & $N-3$ \\
\hline
$c+2$ & $1$   \\
\hline
\end{tabular}
\end{center}
The extra eigenvalues in the Kähler and quaternionic-Kähler cases
act trivially on $R_{*}$. Formula~\eqref{eq:common-model-total-action}
gives 
\begin{equation}
Q(R_{c},R_{*})=24\kappa c-72.\label{eq:common-sharpness-Q}
\end{equation}

For every admissible real $k\geq2$, not only for $k\leq N-1$, 
\begin{equation}
\Sigma_{k}(R_{c}|_{\mathfrak{g}})\geq kc-2.\label{eq:common-sharpness-spectral-bound}
\end{equation}
Indeed, the first two eigenvalues are $c-1$, and every remaining
eigenvalue is at least $c$. The same argument proves \eqref{eq:common-sharpness-spectral-bound}
for the full Kähler and quaternionic-Kähler operators, since their
additional eigenvalues are also at least $c$. Equality holds for
$2\leq k\leq N-1$, where only the two eigenvalues $c-1$ and copies
of $c$ enter the fractional sum. This includes the endpoint $k=2=N-1$
for $\mathfrak{su}(2)$.

In all three settings, 
\[
k_{*}=\frac{2\kappa}{3}\leq N-1,\quad c_{*}=\frac{3}{\kappa}=\frac{2}{k_{*}}.
\]
At $(k_{*},c_{*})$, both $Q(R_{c_{*}},R_{*})$ and the relevant $\Sigma_{k_{*}}$
are zero. Now fix any larger allowed threshold $k>k_{*}$. Choose
\[
\frac{2}{k}<c<\frac{3}{\kappa}.
\]
Then~\eqref{eq:common-sharpness-spectral-bound} gives $\Sigma_{k}>0$,
whereas~\eqref{eq:common-sharpness-Q} gives $Q(R_{c},R_{*})<0$.

Thus the three thresholds are sharp for the universal pointwise algebraic
implication from the stated spectral condition to nonnegativity of
the Bochner curvature term. \end{remark}

\appendix
\section{The Ricci tensor and harmonic curvature}
\label{sec:appendix-ricci}

The Einstein hypothesis in Theorem~1.1 makes the Ricci tensor parallel.
For metrics with harmonic curvature, a natural first question is whether
our curvature condition still makes the Bochner term of the Ricci tensor
nonnegative. We prove that this is the case in dimensions
$3\le n\le341$, and consequently obtain parallel Ricci curvature on
closed manifolds with harmonic curvature in this range. The argument
uses the fact that the tensor is the Ricci contraction of the same
curvature operator: the corresponding assertion for an arbitrary
symmetric two-tensor already fails in every dimension $n\ge5$.
We also construct algebraic counterexamples for the Ricci tensor in
every dimension $n\ge342$. Thus the dimension range is optimal for
this pointwise implication, but the examples do not settle the
geometric question in higher dimensions. Unlike the main body of the
paper, the geometric statement below does not assume that the metric
is Einstein.

We retain the curvature conventions of Section~2 and the notation
$\Sigma_\alpha$ from the introduction. Let
$R:\bigwedge^2 V \to\bigwedge^2 V$ be an algebraic
curvature operator, and let $\eta_1,\ldots,\eta_N$ be an orthonormal
eigenbasis with eigenvalues $\lambda_1\le\cdots\le\lambda_N$, where
$N=\binom n2$. For a symmetric two-tensor $h$, extend the notation
for the Bochner term by setting
\[
Q(R,h)=\sum_{i=1}^N\lambda_i|\eta_i \cdot h|^2
\]
and 
\[
(L\cdot h)(x,y)=-h(Lx,y)-h(x,Ly).
\]
Here $|h|$ is the tensor norm, equivalently the Hilbert-Schmidt norm
of the associated endomorphism of $\mathbb R^n$.
In an orthonormal eigenbasis $\{e_1,\ldots,e_n\}$ of $h$, with
corresponding eigenvalues $h_i$, we have 
\begin{equation}\label{eq:appendix-bochner}
\frac12 Q(R,h)=\sum_{i<j}K_{ij}(h_i-h_j)^2,
\end{equation}
where $K_{ij}=\langle R(e_i\wedge e_j),e_i\wedge e_j\rangle$. 
Indeed, the tensors $(e_i\wedge e_j)\cdot h$ are mutually
orthogonal and have squared norms $2(h_i-h_j)^2$.
In particular, if $r_i$ are the eigenvalues of $\operatorname{Ric}_R$,
then in an orthonormal basis diagonalizing the Ricci tensor, we have $r_i=\sum_{j\ne i}K_{ij}$ and
\begin{equation}
\label{eq:appendix-contraction}
\frac12 Q(R,\operatorname{Ric}_R)
=\sum_{i<j}K_{ij}(r_i-r_j)^2.
\end{equation}

\begin{theorem}
\label{thm:appendix-ricci}
\begin{enumerate}
\item For every $n\ge5$, there exist an $\frac{2(n-1)}{3}$-nonnegative
algebraic curvature operator $R$ and a trace-free symmetric
two-tensor $h$ such that $Q(R,h)<0$.
\item If $3\le n\le341$, every $\frac{2(n-1)}{3}$-nonnegative algebraic
curvature operator $R$ satisfies
$Q(R,\operatorname{Ric}_R)\ge0$.
\item For every $n\ge342$, there exists a $\frac{2(n-1)}{3}$-nonnegative
algebraic curvature operator $R$ such that
$Q(R,\operatorname{Ric}_R)<0$.
\end{enumerate}
\end{theorem}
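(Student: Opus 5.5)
The plan is to handle all three parts through the sectional-curvature formula \eqref{eq:appendix-bochner}. For a symmetric two-tensor diagonal in $e_1,\dots,e_n$, the quantity $\frac12Q(R,h)$ depends only on the $\binom n2$ numbers $K_{ij}=\langle R(e_i\wedge e_j),e_i\wedge e_j\rangle$. These are diagonal entries of $R$ in the basis $\{e_i\wedge e_j\}$, so Schur--Horn type majorization relates them to the spectrum. The $\frac{2(n-1)}{3}$-nonnegativity hypothesis then gives $\sum_{(i,j)\in S}K_{ij}\ge\Sigma_{|S|}(R)\ge0$ for every set $S$ of at least $\frac{2(n-1)}{3}$ coordinate planes (with the fractional version for non-integer sizes). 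The whole appendix therefore becomes a problem about a weighted graph on $n$ vertices with edge weights $K_{ij}$ satisfying partial-sum lower bounds.

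\emph{Part (1).} Here $h$ is free, so I want $R$ with a few very negative $K_{ij}$ that contribute little to $\Sigma_k$. I would first try $R=c\operatorname{Id}+R_*$, with $R_*$ the block of Lemma~\ref{lem:common-curvature-block} and $c=2/k=3/(n-1)$, which makes $\Sigma_k=0$ by \eqref{eq:common-sharpness-spectral-bound}. A direct check shows this is not enough. On the support of $R_*$ the sectional curvatures are $K_{12}=K_{13}=K_{24}=K_{34}=-\tfrac12$ and $K_{14}=K_{23}=1$. The best choice $h=\operatorname{diag}(1,-1,-1,1,0,\dots)$ gives a contribution $-8$, which loses to the term $c\,n|h|^2=12n/(n-1)$. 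Rescaling $R_*$ only reproduces this ratio.

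So the main construction should instead concentrate negative curvature more efficiently. I would take $R=c\operatorname{Id}+\sum_m\mu_m\,\omega_m\otimes\omega_m$ with several $2$-forms $\omega_m$, built from self-dual and anti-self-dual forms on two or three orthogonal $4$-planes, with $\sum\mu_m\omega_m\wedge\omega_m=0$ so that Bianchi holds. I would tune the $\mu_m$ so that exactly about $\frac{2(n-1)}{3}$ eigenvalues are negative while the negative sectional curvatures fall on a bipartite pattern matched to a $\pm1$ tensor $h$. Note that $h$ need not be the Ricci tensor here, which is what the condition $n\ge5$ allows. The key step is to verify that the negative part of $\sum K_{ij}(h_i-h_j)^2$ grows with the number of blocks while $\Sigma_k$ stays at $0$.

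\emph{Parts (2) and (3).} For $h=\operatorname{Ric}_R$ we have $r_i=\sum_{j\ne i}K_{ij}$, so \eqref{eq:appendix-contraction} becomes the cubic expression $\sum_{i<j}K_{ij}(r_i-r_j)^2$ in the $K_{ij}$. My approach is to fix the Ricci eigenvalues $r_1\le\dots\le r_n$ and minimize this expression, which is linear in $K$. The constraints are the partial-sum bounds above together with the row sums $\sum_jK_{ij}=r_i$. By linearity an extremal configuration puts the minimal allowed weight on the edges with the largest $(r_i-r_j)^2$.

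I expect symmetry and convexity to reduce the extremal case to a two- or three-parameter family. One candidate is one vertex with Ricci value $r_1$ and $n-1$ vertices at a common value, with negative weight $-\beta$ on a suitable set of $\lceil\frac{2(n-1)}{3}\rceil$ edges and the rest balanced. The resulting inequality should be an explicit rational function of $n$ whose sign changes between $341$ and $342$. For (2) I would prove it is nonnegative for $n\le341$ via this reduction. For (3) I would turn the extremal configuration into a genuine algebraic curvature operator, for example as a combination of $\operatorname{Id}$, products of constant-curvature factors, and Kulkarni--Nomizu terms $g\owedge A$, and check that its spectrum, not just its diagonal, is $\frac{2(n-1)}{3}$-nonnegative.

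The main obstacle is this last point in both constructive parts. The majorization argument only shows that spectral partial sums are bounded above by diagonal partial sums. A counterexample therefore needs an $R$ whose eigenvalues are controlled exactly, so the candidates should be built from commuting, simultaneously diagonalizable pieces whose spectra can be read off. The reduction in (2) also needs care to justify that the extremal $K$ is attained by the symmetric configuration.
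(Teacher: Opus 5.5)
Your proposal is a plan rather than a proof, and the concrete guesses it commits to do not work. The idea you are missing, which the paper uses in both constructive parts, is this: every operator that is diagonal in the basis $\{e_i\wedge e_j\}$ is an algebraic curvature operator, because its Bianchi form $\sum_{i<j}K_{ij}(e_i\wedge e_j)\wedge(e_i\wedge e_j)$ vanishes. Its spectrum is exactly the list of the $K_{ij}$, and its Ricci tensor is diagonal with the same $r_i$. Combine this with your majorization remark and with the fact that $Q(R,h)$ only sees the $K_{ij}$. The whole appendix then becomes a statement about lists $(K_{ij})_{i<j}$ that are $\frac{2(n-1)}{3}$-nonnegative \emph{as lists}. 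The ``main obstacle'' you name, controlling the spectrum rather than just the diagonal, disappears. Part (1) then needs no self-dual or anti-self-dual blocks. Take $h=\operatorname{diag}(1,-1,0,\ldots,0)$, $K_{12}=\frac{5-2n}{3}$, and all other $K_{ij}=1$. Then $\Sigma_{\frac{2(n-1)}{3}}(R)=0$ and $\frac12Q(R,h)=\frac{4(5-2n)}{3}+2(n-2)=\frac{2(4-n)}{3}<0$. Your multi-block construction is never verified, and it is unnecessary. The efficient choice is to put all the negative curvature on the single plane that $h$ weights by $(h_1-h_2)^2=4$, at a cost of only $|h|^2=2$.

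For (2) and (3), the reduction to a symmetric extremal configuration is only hoped for, and the candidate you name can never be a counterexample. If $r_2=\cdots=r_n=r_0$, then $\frac12Q(R,\operatorname{Ric}_R)=(r_1-r_0)^2r_1$. Moreover $r_1=\sum_{j\ge2}K_{1j}\ge\Sigma_{n-1}(R)\ge\frac32\Sigma_{\frac{2(n-1)}{3}}(R)\ge0$. So this family always has $Q\ge0$ and cannot locate the threshold. The paper's counterexample for $n\ge342$ uses two distinguished vertices instead. It has a single very negative $K_{12}=c-\frac{35}{27}$ with $c=\frac{35}{18(n-1)}$, so that $\Sigma_{\frac{2(n-1)}{3}}(R)=0$ exactly. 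It adds a small positive bump on $K_{1j}$ for $3\le j\le19$, and a Ricci-adjusting bump on the edges among the vertices $j\ge20$, which contributes nothing to $Q$. For the positive direction the paper never finds an extremizer. It uses the weight comparison \eqref{eq:appendix-weight-comparison}: $\sum K_{ij}W_{ij}\ge0$ whenever $W_{ij}\ge0$ and $\alpha\max W_{ij}\le\sum W_{ij}$. This settles the case $\max_{i<j}(r_i-r_j)^2\le\beta|\operatorname{Ric}_R^\circ|^2$ at once. When the single weight $(r_1-r_2)^2$ dominates, the paper flattens the weights to $W_{ij}=(r_i-r_j)^2-u_i-u_j$ with $\sum_iu_i=0$. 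It then uses the identity $\sum_{i<j}K_{ij}(u_i+u_j)=\sum_iu_ir_i$. That identity is exactly where the fact that $h$ is the Ricci contraction of $R$ enters, and it is what fails for the arbitrary $h$ of part (1). The rest is the explicit choice of $u$ and the proof that $\sum_iu_ir_i^\circ\ge0$. The choice $\beta=\bigl(\frac{10-\sqrt2}{7}\bigr)^2$ gives the range $n\le172+120\sqrt2$. Without this mechanism, or an actual proof of your extremal reduction, part (2) is not established.
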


\subsection{An arbitrary symmetric two-tensor}

\begin{proof}[Proof of Theorem~\ref{thm:appendix-ricci}(1)]
Take $h=\operatorname{diag}(1,-1,0,\ldots,0)$ and let $R$ be
diagonal in the basis $\{e_i\wedge e_j\}_{1 \leq i < j \leq n}$, with
\[
K_{12}=\frac{5-2n}{3}, \text{ and } 
K_{ij}=1 \text{ if }\{i,j\}\ne\{1,2\}.
\]
An operator diagonal in this simple-bivector basis satisfies the
first Bianchi identity. Its eigenvalues are $\frac{5-2n}{3}$ with multiplicity one and $1$
with multiplicity $N-1$, so
\[
\Sigma_{\frac{2(n-1)}{3}}(R)=\frac{5-2n}{3}+\left(\frac{2(n-1)}{3}-1\right)=0.
\]
On the other hand, for $n\geq 5$, we have
\[
\frac12 Q(R,h)=\frac{2(4-n)}3<0.
\]
\end{proof}

\subsection{The Ricci estimate}

The positive assertion follows from a more general bound. Set
\[
\beta=\left(\frac{10-\sqrt2}{7}\right)^2
=\frac{102-20\sqrt2}{49} \in \left(\frac{3}{2}, 2\right),
\]

\begin{proposition}
\label{prop:appendix-ricci}
Let $n\ge3$. If $R$ is an $\alpha$-nonnegative algebraic curvature
operator and
\[
1\le\alpha\le\frac n\beta
=\frac{51+10\sqrt2}{98}\,n,
\]
then $Q(R,\operatorname{Ric}_R)\ge0$.
\end{proposition}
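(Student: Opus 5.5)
The approach is to reduce to the diagonal entries of $R$ and then apply Lemma~\ref{lem:prefix-comparison}. Fix an orthonormal eigenbasis $e_1,\dots,e_n$ of $\operatorname{Ric}_R$ and use \eqref{eq:appendix-contraction}:
\[
\tfrac12 Q(R,\operatorname{Ric}_R)=\sum_{i<j}K_{ij}w_{ij},\qquad w_{ij}=(r_i-r_j)^2\ge0 .
\]
The numbers $K_{ij}=\langle R(e_i\wedge e_j),e_i\wedge e_j\rangle$ are the diagonal entries of $R$ in the orthonormal basis $\{e_i\wedge e_j\}$. By the Schur--Horn (Ky Fan) inequality, the ordered list $K_{(1)}\le\cdots\le K_{(N)}$ of these entries satisfies $\Sigma_\alpha(K)\ge\Sigma_\alpha(R)\ge0$. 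So the claim becomes a statement about a list of $N$ reals with nonnegative $\alpha$-sum, weighted by the $w_{ij}$, with the extra coupling $r_i=\sum_{j\ne i}K_{ij}$.

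\emph{Crude step.} Order the pairs by increasing $K$ and apply Lemma~\ref{lem:prefix-comparison} with $a=\max w_{ij}$ and $k=\sum w_{ij}/a$. This gives
\[
\sum K_{ij}w_{ij}\ge a\,\Sigma_k(K).
\]
Whenever $k\ge\alpha$, monotonicity of the averages $\Sigma_k/k$ then yields nonnegativity. The identity $\sum_{i<j}(r_i-r_j)^2=n\sum_i(r_i-\bar r)^2$ together with $\max(r_i-r_j)^2\le 2\sum_i(r_i-\bar r)^2$ gives $k\ge n/2$. This already proves the statement for $\alpha\le n/2$, with no use of the coupling.

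\emph{Main step: improving $n/2$ to $n/\beta$.} Equality in $\max(r_i-r_j)^2\le2\sum(r_i-\bar r)^2$ forces the Ricci spread to be concentrated on a single pair, say $r_1=\bar r+s$, $r_2=\bar r-s$, with all other $r_i\approx\bar r$. I will exploit the coupling there.
\begin{itemize}
\item Split the weights as $w=\min(w,\tau)+(w-\tau)_+$ for a parameter $\tau$ to be optimized.
\item The truncated part satisfies the hypothesis of Lemma~\ref{lem:prefix-comparison} with $a=\tau$ and a total now of size at least $\alpha\tau$, so it contributes a term $\ge0$ plus slack.
\item The excess $(w-\tau)_+$ is supported only on pairs involving the one or two indices where $r_i$ is extreme, essentially the pair $\{1,2\}$ and pairs through $1$ or $2$.
\item On these pairs, bound the possibly negative $K_{ij}$ from below by rewriting them through $r_1,r_2$ (since $r_1=\sum_j K_{1j}$), and use $\alpha$-nonnegativity once more: a single entry satisfies $K_{(1)}\ge-(\alpha-1)K_{(\lfloor\alpha\rfloor+1)}$-type bounds.
\item Absorb the result into the slack from the truncated part.
\end{itemize}

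Balancing the two contributions gives a quadratic inequality in the ratio $s^2/\sum(r_i-\bar r)^2$ and in $\tau$. I expect the optimal $\tau$ to produce exactly $\sqrt\beta=(10-\sqrt2)/7$, hence the range $\alpha\le n/\beta$.

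The main obstacle is this second step. One must identify which pairs carry excess weight, and control $K_{12}$ (the most dangerous entry) via the coupling $r_1-r_2=\sum_{j\ge3}(K_{1j}-K_{2j})$ together with Cauchy--Schwarz over the $n-2$ remaining indices. The resulting two-variable optimization must be carried out carefully; it is where the constant $\beta$ should appear, and where sharpness against the examples of part~(3) would be checked.
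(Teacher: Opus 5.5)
Your crude step is correct and is exactly the paper's easy case: Ky Fan (equivalently, $\operatorname{tr}(RW)$ with $W$ diagonal in $\{e_i\wedge e_j\}$) together with Lemma~\ref{lem:prefix-comparison} gives $Q(R,\operatorname{Ric}_R)\ge0$ whenever $\alpha\max_{i<j}(r_i-r_j)^2\le\sum_{i<j}(r_i-r_j)^2$, hence for $\alpha\le n/2$. The whole content of the proposition, however, is the passage from $n/2$ to $n/\beta$. There you offer only a plan that ends in ``I expect''.

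\textbf{The ingredients you specify give no improvement.} These are truncation at $\tau$, a single-entry lower bound from $\Sigma_\alpha\ge0$, and absorption into slack. Suppose only the pair $\{1,2\}$ exceeds $\tau$, and write $M=w_{12}$, $S=\sum w_{ij}$. Take the list with one entry $1-\alpha$ and all others $1$, so that $\Sigma_\alpha=0$. Your resulting lower bound is
\[
\tau\,\Sigma_{(S-M+\tau)/\tau}(K)+(M-\tau)K_{(1)}=S-\alpha M
\]
for every $\tau$. So you recover precisely the crude condition, and no optimization in $\tau$ can produce $\sqrt\beta$.

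This is unavoidable. The construction in the proof of Theorem~\ref{thm:appendix-ricci}(1), taken with $K_{12}=1-\alpha$ and $h=\operatorname{diag}(1,-1,0,\ldots,0)$, gives $\tfrac12Q(R,h)=2n-4\alpha$. Hence any argument that does not use $r_i=\sum_{j\ne i}K_{ij}$ stops at $\alpha\le n/2$. The one coupling identity you make explicit, $r_1-r_2=\sum_{j\ge3}(K_{1j}-K_{2j})$, does not contain $K_{12}$ at all. It therefore cannot control the dangerous entry.

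\textbf{The missing idea is the paper's mean-zero weight shift.} For any $u\in\mathbb{R}^n$ with $\sum_iu_i=0$, the Ricci identity gives
\[
\sum_{i<j}K_{ij}(r_i-r_j)^2=\sum_{i<j}K_{ij}W_{ij}+\sum_iu_ir_i^\circ,\qquad W_{ij}=(r_i-r_j)^2-u_i-u_j .
\]
The paper normalizes $r_1=\max r_i$, $r_2=\min r_i$, $r_1-r_2=2$, and sets $D=4-\beta|\operatorname{Ric}_R^\circ|^2>0$. It then takes $u_1=s$, $u_2=D-s$, and $u_j=-(s-\ell_j^2)_+$ for $j\ge3$, where $\ell_j=r_1-r_j$ and $s$ is fixed by $\sum_{j\ge3}(s-\ell_j^2)_+=D$. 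This splits the excess $D$ of the $\{1,2\}$ weight between rows $1$ and $2$ and compensates on the rows with $\ell_j^2<s$. The result keeps $0\le W_{ij}\le\beta|\operatorname{Ric}_R^\circ|^2$ with the total still $n|\operatorname{Ric}_R^\circ|^2$, so the crude comparison applies to $W$.

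The genuinely hard step is then $\sum_iu_ir_i^\circ\ge0$. The paper proves it by Cauchy--Schwarz estimates resting on $2-\beta=4(\sqrt\beta-1)(3-2\sqrt\beta)$, and this quadratic relation in $\sqrt\beta$ is what pins down $\beta=\bigl((10-\sqrt2)/7\bigr)^2$. Your sketch contains neither the mean-zero shift nor any mechanism for showing that the resulting linear correction term is nonnegative.
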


We first record a consequence of Lemma~\ref{lem:prefix-comparison}. Suppose that
$1\le\alpha<N$ and $R$ is $\alpha$-nonnegative. In any orthonormal
basis $\{e_1,\ldots,e_n\}$, nonnegative weights $W_{ij}$ satisfying
\[
\alpha\max_{i<j}W_{ij}\le\sum_{i<j}W_{ij}
\]
obey
\begin{equation}
\label{eq:appendix-weight-comparison}
\sum_{i<j}K_{ij}W_{ij}\ge0.
\end{equation}
Indeed, let $W$ be the operator diagonalized by $e_i\wedge e_j$ with
entries $W_{ij}$. The assertion is immediate if $W=0$. Otherwise,
\[
0\le W\le\frac{\operatorname{tr}W}{\alpha}\,\mathrm{Id}.
\]
Its diagonal entries in an ordered eigenbasis of $R$ are nonnegative,
are bounded by $\operatorname{tr}W/\alpha$, and have sum
$\operatorname{tr}W$. Lemma~4.1 therefore gives
\[
\sum_{i<j}K_{ij}W_{ij}
=\operatorname{tr}(RW)
\ge\frac{\operatorname{tr}W}{\alpha}\Sigma_\alpha(R)\ge0.
\]

\begin{proof}[Proof of Proposition~\ref{prop:appendix-ricci}]
Write $\operatorname{Ric}_R^\circ$ for the traceless Ricci tensor
and $r_i^\circ=r_i-\frac1n\operatorname{tr}(\operatorname{Ric}_R)$ for its eigenvalues in the chosen Ricci eigenbasis.
If $\operatorname{Ric}_R^\circ=0$, there is nothing to prove.
By positive rescaling and relabeling, we may assume $r_1=\max_i r_i$, $r_2=\min_i r_i$, and $r_1-r_2=2$.
Then $|\operatorname{Ric}_R^\circ|^2\ge2$ and
\begin{equation}
\label{eq:appendix-total}
    \sum_{i<j}(r_i-r_j)^2=n|\operatorname{Ric}_R^\circ|^2.
\end{equation}

If $4\le\beta|\operatorname{Ric}_R^\circ|^2$, the comparison
\eqref{eq:appendix-weight-comparison} applies directly to the
weights $(r_i-r_j)^2$, since $\alpha\beta\le n$.
We may therefore assume
\[
D=4-\beta|\operatorname{Ric}_R^\circ|^2>0.
\]
In particular, $D<1$.

For $j\ge3$, put $\ell_j=r_1-r_j\in[0,2]$.
Choose $s>0$ so that
\begin{equation}
\label{eq:appendix-s}
    \sum_{j=3}^n(s-\ell_j^2)_+=D,
\end{equation}
which is possible by continuity. Define $u_1=s$, $u_2=D-s$, $u_j=-(s-\ell_j^2)_+$ for $j \geq 3$, and 
\[
W_{ij}=(r_i-r_j)^2-u_i-u_j.
\]
Since $\sum_i u_i=0$, the modified weights have the same total
\begin{equation}
\label{eq:appendix-modified-total}
    \sum_{i<j}W_{ij}=n|\operatorname{Ric}_R^\circ|^2.
\end{equation}
We will show that
\begin{equation}\label{eq W_ij bound}
    0\le W_{ij}\le\beta|\operatorname{Ric}_R^\circ|^2,
\end{equation}
and 
\[
\sum_i u_i r_i^\circ\ge0.
\]
The first assertion is the spectral comparison, while the
second controls the change in the Bochner term.

\emph{Step 1: bound the modified weights.}
The three entries $r_1^\circ,r_2^\circ,r_j^\circ$ give
\[
    |\operatorname{Ric}_R^\circ|^2
    \ge\frac{4+\ell_j^2+(2-\ell_j)^2}{3}.
\]
Since $\beta>3/2$, this implies $ D\le\ell_j(2-\ell_j)$ and 
\[
    \ell_j^2\le\frac32|\operatorname{Ric}_R^\circ|^2
    <\beta|\operatorname{Ric}_R^\circ|^2.
\]
Fix an index $k\ge3$ minimizing $\ell_k$.
By \eqref{eq:appendix-s}, we have $\ell_k^2<s$ and
$s-\ell_k^2\le D$. Hence
\begin{equation}
\label{eq:appendix-s-bound}
    s\le\ell_k^2+D\le2\ell_k\le2\ell_j, \text{ for } j \geq 3.
\end{equation}
It follows that 
$$W_{12}=4-D=\beta|\operatorname{Ric}_R^\circ|^2$$ 
and 
$$W_{1j}=(\ell_j^2-s)_+
  \leq \beta|\operatorname{Ric}_R^\circ|^2.$$

Also,
\[
\begin{aligned}
    W_{2j}+D
    &=(2-\ell_j)^2+s+(s-\ell_j^2)_+\\
    &\le(2-\ell_j)^2+2\ell_j+(2\ell_j-\ell_j^2) \\
    & =4.
\end{aligned}
\]
For the lower bound, $s>\ell_k^2$ gives
\[
    W_{2j}>\ell_k^2+(2-\ell_j)^2-D.
\]
When $j=k$, the right-hand side is at least $2-D>0$.
When $j\ne k$, the weighted Cauchy-Schwarz inequality
$x^2+\sqrt2\,y^2\ge(2-\sqrt2)(x+y)^2$ gives
\[
    \ell_k^2+(2-\ell_j)^2
    +\sqrt2\,|\operatorname{Ric}_R^\circ|^2
    \ge2\bigl((r_1^\circ)^2+(r_2^\circ)^2\bigr)
    \ge4.
\]
Since $\beta>\sqrt2$, this again yields $W_{2j}>0$.
Finally, for $3\le j<k\le n$, the disjoint pairs $(1,2)$ and
$(j,k)$ give 
$$(r_j-r_k)^2\le2|\operatorname{Ric}_R^\circ|^2-4.$$
Since $-u_j-u_k\le D$, it follows that
\[
    0\le W_{jk}
    \le2|\operatorname{Ric}_R^\circ|^2-4+D
    =(2-\beta)|\operatorname{Ric}_R^\circ|^2
    \le\beta|\operatorname{Ric}_R^\circ|^2.
\]
This proves the required bounds in \eqref{eq W_ij bound}.

\emph{Step 2: control the correction.}
Set
\[
    C=2-\beta\bigl((r_1^\circ-1)^2+1\bigr).
\]
Since $r_2^\circ=r_1^\circ-2$, we have
\[
    2C=4-\beta\bigl((r_1^\circ)^2+(r_2^\circ)^2\bigr)
    \ge D>0.
\]
The choice of $\beta$ gives
\[
    2-\beta=4\left(\sqrt\beta-1\right)\left(3-2\sqrt\beta\right),
\]
with both factors on the right positive.
Using $C+\beta(r_1^\circ-1)^2=2-\beta$, Cauchy-Schwarz
followed by the inequality $2\sqrt{ab} \leq a+b$ for $a,b \geq 0$ yields, for $0\le\ell\le2$,
\[
\begin{aligned}
    \sqrt{C(2-\ell)}-\sqrt\beta\,(r_1^\circ-1)
    &\le\sqrt{(2-\beta)(3-\ell)}\\
    &\le(\sqrt\beta-1)(3-\ell)+(3-2\sqrt\beta).
\end{aligned}
\]
Consequently,
\begin{equation}
\label{eq:appendix-scalar}
    \sqrt{C(2-\ell)}
    \le\ell+\sqrt\beta\,(r_1^\circ-\ell).
\end{equation}
If $u_j<0$, then $\ell_j^2-u_j=s$ and
$r_1^\circ-\ell_j=r_j^\circ$. Another application of
Cauchy-Schwarz therefore gives
\[
\begin{aligned}
    C(2-\ell_j)(-u_j)
    &\le(\ell_j+\sqrt\beta\,r_j^\circ)^2(-u_j)\\
    &\le(\ell_j^2-u_j)
          \bigl(-u_j+\beta(r_j^\circ)^2\bigr)\\
    &=s\bigl(-u_j+\beta(r_j^\circ)^2\bigr).
\end{aligned}
\]
The same inequality is immediate when $u_j=0$.
Summing over $j\ge3$ gives
\[
    C\sum_{j=3}^n(2-\ell_j)(-u_j)
    \le s\left(D+\beta\sum_{j=3}^n(r_j^\circ)^2\right)
    =2sC.
\]
Since $C>0$, we conclude that
\begin{equation}
\label{eq:appendix-correction}
    \sum_i u_i r_i^\circ
    =2s-\sum_{j=3}^n(2-\ell_j)(-u_j)\ge0.
\end{equation}

To finish, Step~1 and \eqref{eq:appendix-modified-total} imply
\[
    \alpha\max_{i<j}W_{ij}
    \le\alpha\beta|\operatorname{Ric}_R^\circ|^2
    \le n|\operatorname{Ric}_R^\circ|^2
    =\sum_{i<j}W_{ij}.
\]
Applying \eqref{eq:appendix-weight-comparison} and using
$\sum_i u_i=0$, we obtain
\[
\begin{aligned}
    \frac12 Q(R,\operatorname{Ric}_R)
    &=\sum_{i<j}K_{ij}W_{ij}+\sum_i u_i r_i\\
    &=\sum_{i<j}K_{ij}W_{ij}+\sum_i u_i r_i^\circ
    \ge0.
\end{aligned}
\]
\end{proof}

\begin{proof}[Proof of Theorem~\ref{thm:appendix-ricci}(2)]
For $\alpha=\frac{2(n-1)}{3}$, the condition $\alpha\beta\le n$ is
equivalent to
\[
    n\le\frac{2\beta}{2\beta-3}=172+120\sqrt2.
\]
Since $341<172+120\sqrt2<342$, the assertion follows from
Proposition~\ref{prop:appendix-ricci}.
\end{proof}

\subsection{Algebraic counterexamples in higher dimensions}
\label{subsec:appendix-counterexamples}

\begin{proof}[Proof of Theorem~\ref{thm:appendix-ricci}(3)]
Let $n\ge342$ and set
\[
\alpha=\frac{2(n-1)}3
 \text{ and } c=\frac{35}{18(n-1)}.
\]
Define $R$ to be diagonal in the basis $\{e_i\wedge e_j\}_{i < j}$, with
\begin{equation}
\label{eq:appendix-counterexample}
K_{ij}=
\begin{cases}
c-\dfrac{35}{27}, & (i,j)=(1,2),\\[4pt]
c+\dfrac{2}{17}, & i=1,\quad 3\le j\le19,\\[4pt]
c+\dfrac{2}{27(n-20)}, & 20\le i<j\le n,\\[4pt]
c, & \text{otherwise}.
\end{cases}
\end{equation}
As before, being diagonalized by this basis ensures the first Bianchi
identity. The operator has one negative eigenvalue $c-\frac{35}{27}$;
the eigenvalue $c$ has multiplicity $19n-208>\alpha$, and all
remaining eigenvalues exceed $c$. Consequently,
\[
\Sigma_\alpha(R)=\alpha c-\frac{35}{27}=0.
\]
The Ricci tensor is diagonal, and its eigenvalues have mean
$109/54$. Writing $d_i=r_i-109/54$, we find
\[
d_1=\frac{17}{27},\quad
 d_2=-\frac{37}{27},\quad
 d_3=\cdots=d_{19}=\frac{20}{459},\quad
 d_{20}=\cdots=d_n=0.
\]
In particular,
\[
\sum_i d_i^2=\frac{28586}{12393},
\quad d_1-d_2=2,
\quad d_1-d_j=\frac{269}{459}\text{ for } 3\le j\le19.
\]
The change to $K_{ij}$ for $20\le i<j\le n$ contributes nothing
to the Bochner term. Using $\sum_i d_i=0$, we therefore obtain
\begin{align*}
\frac12 Q(R,\operatorname{Ric}_R)
&=nc\frac{28586}{12393}
 -\frac{35}{27}\cdot4
 +2\left(\frac{269}{459}\right)^2\\
&=-\frac{24947n-8529282}{1896129(n-1)}<0.
\end{align*}
The numerator is $2592$ at $n=342$ and increases with $n$.
\end{proof}

\subsection{Manifolds with harmonic curvature}

\begin{corollary}
\label{cor:appendix-harmonic}
Let $(M^n,g)$ be a closed, connected Riemannian manifold with
harmonic curvature and $3\le n\le341$. If its curvature operator
is $\frac{2(n-1)}{3}$-nonnegative at every point, then
$\nabla\operatorname{Ric}=0$ and $(M,g)$ is locally symmetric.
\end{corollary}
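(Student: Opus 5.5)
The plan is to run the Bochner technique twice: first on the Ricci tensor, where Theorem~\ref{thm:appendix-ricci}(2) supplies the algebraic input, and then on the full curvature tensor, where Theorem~\ref{thm:einstein} finishes the argument.

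\emph{Step 1: the Ricci Bochner formula.} Harmonic curvature means $\delta R=0$. By the contracted second Bianchi identity this is equivalent to $\operatorname{Ric}$ being a Codazzi tensor, $(\nabla_X\operatorname{Ric})(Y,Z)=(\nabla_Y\operatorname{Ric})(X,Z)$. It also forces constant scalar curvature, since $\delta\operatorname{Ric}=-\frac12 d\,\mathrm{scal}$ and a Codazzi tensor satisfies $\delta\operatorname{Ric}=-d\operatorname{tr}\operatorname{Ric}$, so $d\,\mathrm{scal}=0$. For a Codazzi tensor $h$ with constant trace, the Weitzenböck formula gives
\[
\tfrac12\Delta|h|^2=|\nabla h|^2+\tfrac12 Q(R,h),
\]
where $Q$ is normalized as in the appendix. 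I would take this from the same source as \eqref{eq:bochner}, namely \cite[Proposition~1.4]{PW22} applied to symmetric 2-tensors, or derive it directly from the commutation of covariant derivatives. The normalization must be checked against \eqref{eq:appendix-bochner}, and in particular against the identity $\tfrac12 Q(R,h)=\sum_{i<j}K_{ij}(h_i-h_j)^2$, which is the classical Berger form of the curvature term.

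\emph{Step 2: parallel Ricci tensor.} Apply the formula with $h=\operatorname{Ric}$. Theorem~\ref{thm:appendix-ricci}(2) applies pointwise because $3\le n\le 341$ and the curvature operator is $\frac{2(n-1)}{3}$-nonnegative, so $Q(R,\operatorname{Ric}_R)\ge0$ everywhere. Integrating over the closed manifold gives $\int_M|\nabla\operatorname{Ric}|^2=0$, hence $\nabla\operatorname{Ric}=0$.

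\emph{Step 3: local symmetry.} A parallel Ricci tensor makes its eigenspace distributions parallel. By de Rham decomposition, the universal cover splits isometrically as a product of Einstein factors (a flat factor corresponds to the zero eigenvalue). I see two routes from here.
\begin{enumerate}
\item Avoid the splitting and run the Bochner argument on $R$ directly. With $\nabla\operatorname{Ric}=0$ and $\delta R=0$, the full curvature satisfies $\tfrac12\Delta\|R\|^2=\|\nabla R\|^2+\tfrac12\sum_i\lambda_i\|\eta_i\cdot R\|^2$. The projection of $R$ onto its Ricci part is parallel, so the same identity holds for the Weyl part $W$, with curvature term $Q(R,W)$.
\item Work on each factor, which compactness does not immediately supply, so I would prefer route 1.
\end{enumerate}

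\emph{Step 4: the curvature term for $W$.} Proposition~\ref{prop:ordered} applies to any self-adjoint $T$, in particular to $W$. The total action $\sum\|\eta_i\cdot W\|^2=4(n-1)\|W\|^2$ holds for every Weyl tensor by Proposition~\ref{prop:total-action}. The difficulty is that the $\eta_i$ diagonalize $R$ but not necessarily $W$ once $\operatorname{Ric}$ is non-scalar, so the ordered estimate cannot be used as it stands.

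I expect this to be the main obstacle. The first thing I would try is the splitting route after all: parallel Ricci gives, on the universal cover, Einstein factors $M_1\times\cdots\times M_m$. The curvature operator of a product is block diagonal, so $\frac{2(n-1)}{3}$-nonnegativity passes to each factor by \eqref{eq:restriction_ineq}, because $\frac{2(n_a-1)}{3}\le\frac{2(n-1)}{3}$ and averages are monotone. Each factor is Einstein, so Lemma~\ref{lem:common-bochner-comparison}, applied locally or via the pointwise inequality together with the Bochner identity integrated on $M$ with $\|W_a\|^2$ pulled back, yields $\nabla R_a=0$. Low-dimensional factors ($n_a\le3$) are automatically locally symmetric.

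The remaining issue is the integration: the factors need not be compact. I would handle this by noting that the quantity $\sum_a\|W_a\|^2$ descends to $M$, since deck transformations permute isometric factors. The sum of the pointwise Bochner identities then integrates on $M$ to give $\nabla R=0$.
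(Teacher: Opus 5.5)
Your Steps~1--2 match the paper: $\operatorname{Ric}$ is Codazzi with constant trace, its Bochner identity integrates, and Theorem~\ref{thm:appendix-ricci}(2) gives $\nabla\operatorname{Ric}=0$. The Einstein case of Step~4 is also fine. The gap is in the non-Einstein case. You claim that $\frac{2(n-1)}{3}$-nonnegativity passes to each factor as $\frac{2(n_a-1)}{3}$-nonnegativity ``because $\frac{2(n_a-1)}{3}\le\frac{2(n-1)}{3}$ and averages are monotone.'' The monotonicity runs the other way. Since $\Sigma_k/k$ is nondecreasing in $k$, nonnegativity for a smaller $k$ implies it for every larger $k$, not conversely. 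The restriction inequality \eqref{eq:restriction_ineq} only yields $\Sigma_{k}(R|_{\bigwedge^2 TM_a})\ge0$ for the ambient $k=\frac{2(n-1)}{3}$, and even that only when $k\le\binom{n_a}{2}$. This is strictly weaker than the $\frac{2(n_a-1)}{3}$-nonnegativity that Lemma~\ref{lem:common-bochner-comparison} needs on the factor. As written, you have no control of $Q(R_a,W_a)$, and the argument does not close.

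The missing idea is to use the zero eigenvalues that the splitting produces. Take a proper parallel Ricci eigenbundle $E$ of rank $p$. The local product structure makes the curvature operator vanish on the mixed bivectors $E\wedge E^{\perp}$. Hence
\[
\dim\ker R\ge p(n-p)\ge n-1>\tfrac{2(n-1)}{3}.
\]
A single negative eigenvalue $\lambda_1<0$ together with these zeros would give $\Sigma_{\frac{2(n-1)}{3}}(R)\le\lambda_1<0$, so $R\ge0$. With this, your factor-by-factor argument would go through, since each $R_a\ge0$. But the splitting is then unnecessary. Because $R$ is divergence-free, the Bochner identity $\frac12\Delta\|R\|^2=\|\nabla R\|^2+\frac12\sum_i\lambda_i\|\eta_i\cdot R\|^2$ applies to $R$ itself, with the $\eta_i$ diagonalizing $R$. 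Its curvature term is therefore nonnegative, and integrating on $M$ gives $\nabla R=0$. This is your Route~1 applied to $R$ rather than to $W$, so the obstacle you raised there disappears, along with the need for the de~Rham decomposition, the universal cover, or the descent of $\sum_a\|W_a\|^2$. This is exactly the paper's argument.
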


\begin{proof}
Harmonic curvature means that the curvature tensor is
divergence-free. The second Bianchi identity implies that the
Ricci tensor is Codazzi, and the contracted Bianchi identity then
shows that its trace is constant. The integrated Bochner identity for the Ricci tensor is
\[
0=\int_M\left(
 |\nabla\operatorname{Ric}|^2
 +\frac12 Q(R,\operatorname{Ric})
 \right)\,d\operatorname{vol}_g.
\]
By Theorem~\ref{thm:appendix-ricci}(2), both summands are
nonnegative. It follows that $\nabla\operatorname{Ric}=0$.

If $g$ is Einstein and $n\ge4$, Theorem 1.1 applies. If $n=3$ and $g$ is Einstein, it has constant sectional curvature. Otherwise, choose a proper parallel eigensubbundle $E$ of the Ricci endomorphism, of real rank $p$. The local product splitting $TM=E\oplus E^\perp$ implies that the curvature operator annihilates mixed bivectors. Consequently,
$$\dim (\ker R) \ge p(n-p)\ge n-1>\frac{2(n-1)}3.$$
A negative eigenvalue together with these zero eigenvalues would force $\Sigma_{\frac{2(n-1)}3}(R)<0$. Thus $R\ge0$. Since the curvature tensor is harmonic, its integrated Bochner identity gives $\nabla R=0$.
\end{proof}

The examples in Subsection~\ref{subsec:appendix-counterexamples}
are algebraic curvature operators, not metrics on closed manifolds with harmonic
curvature. They establish the optimal dimension range for the
pointwise implication in Theorem~\ref{thm:appendix-ricci}(2), but
do not establish failure of Corollary~\ref{cor:appendix-harmonic}
in dimensions $n\ge342$.

\section{K\"ahler manifolds with harmonic curvature}\label{sec:appendix-kahler-harmonic}

Theorem~\ref{thm:kahler} extends to K\"ahler metrics with harmonic curvature.
The extension uses the classical theorem of Matsushima \cite{Matsushima1972}
that such metrics have parallel Ricci tensor. In the non-Einstein case, the resulting parallel splitting supplies enough zero eigenvalues to reduce the spectral assumption to nonnegativity of the primitive curvature operator.

\begin{theorem}
Let $(M^{2n},g,J)$ be a closed, connected K\"ahler manifold of
complex dimension $n\ge2$ with harmonic curvature,
$\operatorname{div}R=0$. Let $\pi_0$ denote orthogonal projection
onto $\Lambda^{1,1}_0$, and let
\[
    K_0=\pi_0 \circ R\big|_{\Lambda^{1,1}_0}
\]
be the primitive curvature compression.
If $K_0$ is $\frac{2(n+1)}3$-nonnegative at every point, then
$(M,g)$ is locally symmetric. 
\end{theorem}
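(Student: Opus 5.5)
The strategy is to reduce to Theorem~\ref{thm:kahler}, following the pattern of Corollary~\ref{cor:appendix-harmonic}. By Matsushima's theorem \cite{Matsushima1972}, a closed Kähler manifold with harmonic curvature has parallel Ricci tensor. We then split into two cases.

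\emph{Case 1: $g$ is Einstein.} Here $\Lambda^{1,1}_0$ is invariant under $R$, so $K_0=R|_{\mathfrak{su}(n)}$ and Theorem~\ref{thm:kahler} applies directly.

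\emph{Case 2: $g$ is not Einstein.} The Ricci endomorphism is parallel and commutes with $J$, so its eigenspaces are parallel $J$-invariant subbundles. Pass to the universal cover, or argue locally, since local symmetry is local. Choose a proper parallel eigenbundle $E$ of complex rank $p$, with $1\le p\le n-1$. The holonomy then reduces to $\mathfrak{u}(p)\oplus\mathfrak{u}(n-p)$, and the curvature is supported there: $R=R_E\oplus R_{E^\perp}$, each factor Kähler.

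Next we count zero eigenvalues of $K_0$. The mixed part of $\mathfrak{su}(n)$, namely the $(1,1)$-forms pairing $E$ with $E^\perp$, has real dimension $2p(n-p)$. It is orthogonal to $\mathfrak{u}(p)\oplus\mathfrak{u}(n-p)$, so $R$ vanishes on it and hence $K_0$ does too. Since $\pi_0$ only removes the $\omega$-direction, $K_0$ has at least $2p(n-p)\ge 2(n-1)$ zero eigenvalues. For $n\ge2$ we have $2(n-1)\ge \frac{2(n+1)}{3}$, with equality only at $n=2$. Hence if $K_0$ had a negative eigenvalue, $\Sigma_{2(n+1)/3}(K_0)$ would be at most that eigenvalue plus zeros, which is negative. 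This is a contradiction, so $K_0\ge0$.

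It remains to turn $K_0\ge0$ into local symmetry, and I expect this to be the main obstacle, since $K_0$ is a compression rather than the full Kähler curvature operator. I would try three things in order.

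\begin{itemize}
\item First check the $\omega$-direction. On $\Lambda^{1,1}$, with $\omega$ and $\Lambda^{1,1}_0$ both nonnegative, we have $\langle R\omega,\omega\rangle$ equal to the scalar curvature up to a factor. However, $R$ need not preserve $\Lambda^{1,1}_0$ in the non-Einstein case, so the full Kähler curvature operator need not be nonnegative.
\item Second, use the product structure instead. Each factor $R_E$, $R_{E^\perp}$ is Kähler with parallel Ricci. Since $E$ is an eigenbundle, $R_E$ is Kähler--Einstein on its factor, so by de Rham the universal cover splits isometrically as a product of Kähler manifolds, one of them Kähler--Einstein. The primitive forms of each factor embed in $\Lambda^{1,1}_0$ of the total space after subtracting a multiple of the factor Kähler forms. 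Nonnegativity of $K_0$ then gives nonnegativity of the factor primitive curvature compressions, up to the $\omega$-correction terms, which I would need to control.
\item Third, induct on the number of Ricci eigenvalues, applying Theorem~\ref{thm:kahler} (or Tachibana's theorem in the Kähler setting) to each Einstein factor, with a direct argument for complex dimension $1$ factors. Alternatively, run the Bochner formula \eqref{eq:bochner} for the Bochner tensor directly with $K_0\ge0$.
\end{itemize}

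The delicate part is this last transfer from the total primitive operator to the factors. The cross terms from the factor Kähler forms must be handled carefully, possibly by restricting to the invariant subspace of factor-primitive forms and using \eqref{eq:restriction_ineq}.
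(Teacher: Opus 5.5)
Your reduction matches the paper's as far as it goes: Matsushima's theorem, Theorem~\ref{thm:kahler} in the Einstein case, and the nullity count $\dim\ker K_0\ge 2p(n-p)\ge 2(n-1)\ge\frac{2(n+1)}{3}$ forcing $K_0\ge0$. But it stops exactly where one more idea is needed. You list three strategies for passing from $K_0\ge0$ to $\nabla R=0$ and carry out none of them, so the non-Einstein case is not proved.

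The missing observation is that the Bochner term is a basis-independent contraction, $Q(R,R)=\sum_{a,b}\langle R\eta_a,\eta_b\rangle\langle\eta_a\cdot R,\eta_b\cdot R\rangle$, over any orthonormal basis of $\mathfrak{u}(n)$, where $R$ is supported. Choose $\eta_0=\omega/\sqrt n$, and let $\eta_1,\ldots,\eta_{n^2-1}$ be an eigenbasis of $K_0$ with eigenvalues $\lambda_i\ge0$. Because $J$ is central in $\mathfrak{u}(n)$, a K\"ahler curvature tensor satisfies $\eta_0\cdot R=0$. Hence every term containing $\eta_0$ vanishes. In particular, the cross terms $\langle R\omega,\eta_b\rangle$ that make $\Lambda^{1,1}_0$ non-invariant never enter, and the surviving coefficient matrix is exactly that of the compression $K_0$. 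So $Q(R,R)=\sum_{i\ge1}\lambda_i\|\eta_i\cdot R\|^2\ge0$. Integrating the Bochner identity for the harmonic tensor $R$ over the closed manifold $M$ then gives $\nabla R=0$. Your concern in the first bullet, that the full K\"ahler curvature operator need not be nonnegative, is therefore beside the point.

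The other routes you sketch would not close the gap as stated. Applying Theorem~\ref{thm:kahler}, or an induction, to the de Rham factors of the universal cover fails because those factors are in general noncompact; the product splitting of $M$ is only local. The integration step behind Theorem~\ref{thm:kahler} needs a closed manifold, and ``arguing locally'' loses the integration for the same reason. The Bochner argument has to be run on $M$ itself with a globally defined tensor, and $R$ is the natural choice.

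Incidentally, the ``$\omega$-correction terms'' you worry about are zero. A primitive $(1,1)$-form on $E$, extended by zero, is already orthogonal to $\omega=\omega_E+\omega_{E^\perp}$, so the factor primitive compressions are just compressions of $K_0$. This does not fix the compactness problem, though. Your last suggestion, running the Bochner formula directly with $K_0\ge0$, is the right route, and its whole content is the contraction identity above.
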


\begin{proof}
Set $k=\frac{2(n+1)}{3}$. By Matsushima’s theorem \cite{Matsushima1972},
$\nabla\operatorname{Ric}=0$. If $g$ is Einstein, the conclusions
follow from Theorem~1.2 and the strict assertion of Lemma~4.5.

Suppose that $g$ is not Einstein. Choose a proper eigensubbundle
$E$ of the Ricci endomorphism. It is parallel and $J$-invariant,
with complex rank $1\le p\le n-1$. The local product splitting
$TM=E\oplus E^\perp$ implies that $R$ annihilates the mixed
skew-Hermitian endomorphisms. These are primitive and form a
real $2p(n-p)$-dimensional subspace. Hence
\[
    \dim\ker K_0
    \ge 2p(n-p)
    \ge 2(n-1)
    \ge k.
\]
If $K_0$ had a negative eigenvalue, this nullity would force
$\Sigma_k(K_0)<0$. Thus the spectral assumption gives $K_0 \geq 0$ and $ \Sigma_k(K_0)=0$.  
In particular, the strict hypothesis cannot occur in this case.

Let $\eta_1,\ldots,\eta_{n^2-1}$ be an orthonormal eigenbasis
of $K_0$, with eigenvalues $\lambda_i\ge0$, and extend it to
an orthonormal basis of $\mathfrak u(n)$ by
$\eta_0=\frac{\omega}{\sqrt n}$.
The central direction acts trivially on the K\"ahler curvature
tensor, so $\eta_0\cdot R=0$. Consequently, all central and
mixed central--primitive terms vanish in the curvature
contraction, leaving
\[
    Q(R,R)
    =\sum_{i=1}^{n^2-1}\lambda_i\|\eta_i\cdot R\|^2
    \ge0.
\]
This identity uses only the compression $K_0$ and does not
require $\Lambda^{1,1}_0$ to be invariant under $R$.

Since $R$ is harmonic, its integrated Bochner identity
\cite[Proposition~1.4 and Corollary~1.5]{PW22} gives
\[
    0=\int_M
    \left(
        \|\nabla R\|^2+\frac12 Q(R,R)
    \right)\,d\operatorname{vol}_g.
\]
Both terms are nonnegative, so $\nabla R=0$.
\end{proof}

\bibliographystyle{alpha}
\bibliography{referencesv2}

\newcommand{\etalchar}[1]{$^{#1}$}
\begin{thebibliography}{BNP{\etalchar{+}}26}

\bibitem[Ale68]{Alekseevskii1968}
D.~V. Alekseevskii.
\newblock {Riemannian} spaces with exceptional holonomy groups.
\newblock {\em Funct. Anal. Appl.}, 2(2):97--105, 1968.

\bibitem[Ber66]{Berger1966}
Marcel Berger.
\newblock Sur les vari\'{e}t\'{e}s d'{Einstein} compactes.
\newblock In {\em Comptes rendus de la {IIIe} R\'{e}union du Groupement des Math\'{e}maticiens d'Expression Latine ({Namur}, 1965)}, pages 35--55. Librairie universitaire, 1966.

\bibitem[Bes87]{Besse1987}
Arthur~L. Besse.
\newblock {\em {Einstein} manifolds}, volume~10 of {\em Ergebnisse der Mathematik und ihrer Grenzgebiete (3)}.
\newblock Springer-Verlag, Berlin, 1987.

\bibitem[BNP{\etalchar{+}}26]{BNPSW2026}
Kyle Broder, Jan Nienhaus, Peter Petersen, James Stanfield, and Matthias Wink.
\newblock Vanishing theorems for {Hodge} numbers and the {Calabi} curvature operator.
\newblock {\em Adv. Math.}, 500:111077, 2026.

\bibitem[Boc49]{Bochner1949}
Salomon Bochner.
\newblock Curvature and {Betti} numbers. {II}.
\newblock {\em Ann. of Math. (2)}, 50(1):77--93, 1949.

\bibitem[Bre10]{Brendle2010}
Simon Brendle.
\newblock {Einstein} manifolds with nonnegative isotropic curvature are locally symmetric.
\newblock {\em Duke Math. J.}, 151(1):1--21, 2010.

\bibitem[BS25]{BrendleSemmelmann2025}
Simon Brendle and Uwe Semmelmann.
\newblock {Quaternionic-K\"ahler} manifolds with nonnegative sectional curvature.
\newblock {\em arXiv:2506.22181}, 2025.

\bibitem[BW08]{BW08}
Christoph B{\"o}hm and Burkhard Wilking.
\newblock Manifolds with positive curvature operators are space forms.
\newblock {\em Ann. of Math. (2)}, 167(3):1079--1097, 2008.

\bibitem[CMR24]{ColomboMarianiRigoli2024}
Giulio Colombo, Marco Mariani, and Marco Rigoli.
\newblock {Tachibana}-type theorems on complete manifolds.
\newblock {\em Ann. Sc. Norm. Super. Pisa Cl. Sci. (5)}, 25(2):1033--1083, 2024.

\bibitem[Gra77]{Gray1977}
Alfred Gray.
\newblock Compact {K{\"a}hler} manifolds with nonnegative sectional curvature.
\newblock {\em Invent. Math.}, 41:33--43, 1977.

\bibitem[LS94]{LeBrunSalamon1994}
Claude LeBrun and Simon Salamon.
\newblock Strong rigidity of positive quaternion-{K{\"a}hler} manifolds.
\newblock {\em Invent. Math.}, 118(1):109--132, 1994.

\bibitem[Mat72]{Matsushima1972}
Yozo Matsushima.
\newblock Remarks on {K{\"a}hler--Einstein} manifolds.
\newblock {\em Nagoya Math. J.}, 46:161--173, 1972.

\bibitem[MW93]{MM93}
Mario~J. Micallef and McKenzie~Y. Wang.
\newblock Metrics with nonnegative isotropic curvature.
\newblock {\em Duke Math. J.}, 72(3):649--672, 1993.

\bibitem[MZ86]{MokZhong1986}
Ngaiming Mok and Jia-Qing Zhong.
\newblock Curvature characterization of compact {Hermitian} symmetric spaces.
\newblock {\em J. Differential Geom.}, 23(1):15--67, 1986.

\bibitem[PW21a]{PW21}
Peter Petersen and Matthias Wink.
\newblock New curvature conditions for the {Bochner} technique.
\newblock {\em Invent. Math.}, 224(1):33--54, 2021.

\bibitem[PW21b]{PW21Crelle}
Peter Petersen and Matthias Wink.
\newblock Vanishing and estimation results for {Hodge} numbers.
\newblock {\em J. Reine Angew. Math.}, 780:197--219, 2021.

\bibitem[PW22]{PW22}
Peter Petersen and Matthias Wink.
\newblock {Tachibana}-type theorems and special holonomy.
\newblock {\em Ann. Global Anal. Geom.}, 61(4):847--868, 2022.

\bibitem[Sal82]{Salamon1982}
Simon Salamon.
\newblock Quaternionic {K{\"a}hler} manifolds.
\newblock {\em Invent. Math.}, 67(1):143--171, 1982.

\bibitem[Tac74]{Tachibana1974}
Shun-ichi Tachibana.
\newblock A theorem on {Riemannian} manifolds of positive curvature operator.
\newblock {\em Proc. Japan Acad.}, 50(4):301--302, 1974.

\end{thebibliography}

\end{document}